\documentclass[a4paper,twoside,12pt]{article}


\usepackage{amssymb,amsmath,amsthm,latexsym}
\usepackage{amsfonts}
\usepackage{graphicx,caption,subcaption}
\usepackage[pdftex,bookmarks,colorlinks=false]{hyperref}
\usepackage[hmargin=1.2in,vmargin=1.2in]{geometry}
\usepackage[mathscr]{euscript}
\DeclareMathOperator{\lcm}{lcm}

\usepackage[auth-lg,affil-sl]{authblk}
\setcounter{Maxaffil}{4}

\allowdisplaybreaks

\usepackage{float}
\usepackage{calc,tikz}
\usetikzlibrary{arrows,decorations.markings,positioning,shapes.geometric,}
\tikzstyle{vertex}=[circle, draw, inner sep=1pt, minimum size=4pt]
\newcommand{\vertex}{\node[vertex]}

\tikzstyle{ann} = [fill=white,font=\footnotesize,inner sep=1pt]
\tikzstyle{arrow} = [thick,<-->,>=stealth]


\usepackage{multirow}
\usepackage{hhline}
\usepackage{booktabs}
\usepackage{longtable}

\usepackage{url}



\newcommand{\noi}{\noindent}
\newcommand{\N}{\mathbb{N}}


\newtheorem{theorem}{Theorem}[section]
\newtheorem{definition}[theorem]{Definition}
\newtheorem{lemma}[theorem]{Lemma}
\newtheorem{proposition}[theorem]{Proposition}
\newtheorem{corollary}[theorem]{Corollary}

\newtheorem{observation}[theorem]{Observation}

\newtheorem{example}{Example}


\newcommand{\keywordsname}{Keywords}

\newcommand{\mscname}{MSC 2010}


\markboth {\hspace*{-9mm} \centerline{\footnotesize 
		$\delta^{(k)}$-Colouring of Cycle Related Graphs}
}
{ \centerline {\footnotesize 
		J. Kok and S. Naduvath. 
	}\hspace*{-9mm}}


\allowdisplaybreaks



\pagestyle{myheadings}
\thispagestyle{empty}
\markboth {\hspace*{-9mm} \centerline{\footnotesize 
On Derivative Euler Phi Function Set-Graphs}
}
{ \centerline {\footnotesize 
Johan Kok, et al. 
}\hspace*{-9mm}}


\title{\textbf{\sc On Derivative Euler Phi Function Set-Graphs}}


\author{Johan Kok$^\ast$}
\affil{\small Centre for Studies in Discrete Mathematics\\ Vidya Academy of Science \& Technology \\Thalakkottukara, Thrissur, Kerala, India.\\{\tt kokkiek2@tshwane.gov.za,}}

\author{Eunice Gogo Mphako-Banda}
\affil{\small School of Mathematical Sciences\\ University of Witswatersrand \\Johannesburg, South Africa.\\{\tt eunice.mphako-banda@wits.ac.za}}

\author{Sudev Naduvath\footnote{Corresponding author}}
\affil{\small Department of Mathematics\\ CHRIST (Deemed to be University) \\Thalakkottukara, Thrissur, Kerala, India.\\ {\tt sudevnk@gmail.com}}

\date{}


\begin{document}
\maketitle

\hrule

\begin{abstract}
In this paper, we study some graph theoretical properties of two derivative Euler Phi function set-graphs. For the Euler Phi function $\phi(n)$, $n\in \N$, the set $S_\phi(n) =\{i:\gcd(i,n)=1, 1\leq i \leq n\}$ and the vertex set is $\{v_i:i\in S_\phi(n)\}$. Two graphs $G_d(S_\phi(n))$ and $G_p(S_\phi(n))$, defined with respect to divisibility adjacency and relatively prime adjacency conditions, are studied. 
\end{abstract}

\noi\textbf{Keywords:} Euler Phi set-graph, least common multiple set-graph, relatively prime set-graph.

\vspace{0.25cm}

\noi \textbf{Mathematics Subject Classification 2010:} 05C15, 05C38, 05C75, 05C85. 
\vspace{0.25cm}

\hrule

\section{Introduction}

For all  terms and definitions, not defined specifically in this paper, we refer to \cite{BM1,BLS,FH,DBW}. Unless mentioned otherwise, all graphs considered here are undirected, simple, finite and connected.

The \textit{order} and \textit{size} of a graph $G$ are respectively the numbers of vertices and edges in it and are denoted  respectively by $nu(G)$ and $\varepsilon(G)$. In this paper, we write $\nu(G) = n \geq 1$ and size $\varepsilon(G)= p \geq 0$. The minimum and maximum degrees of $G$ are represented by $\delta(G)$ and $\Delta(G)$, respectively. When the context is clear, we write these notations simply as $\delta$ and $\Delta$ respectively. The degree of a vertex $v \in V(G)$ is denoted $d_G(v)$ or when the context is clear, simply as $d(v)$. The complement graph of graph $G$ is denoted by, $\overline{G}$ and for a graph of composite notation such as $G_{g(x)}(f(x))$ the complement graph will be denoted by, $\overline{G}_{g(x)}(f(x))$. Unless mentioned otherwise all graphs $G$ are simple, connected and finite graphs.

The sequences of intergers, $a_i,\ 1\leq i\leq k\ a_i\in \{1,2,3\dots,n\}$ of greatest length such that either $a_j\mid a_{j+1}$ or $a_{j+1}\mid a_j$, $1\leq j\leq k-1$ have been studied in \cite{EFH} and the longest sequences are equivalent to finding a longest path in a divisor graph as defined in \cite{CMSZ}. 

The \textit{divisor graph} of a finite, non-empty set $S$ of positive integers, denoted by $G_d(S)$, is the graph with has vertex set $V(G_d(S)) = \{v_i:i\in S\}$ and edge set, $E(G_d(S)) = \{v_iv_j: i|j,\ \text{or}\ j|i\ \ \text{and}\; i\neq j\}$. Hence, if $S=\{1,2,3,\dots,n\}$, then finding a longest path in $G(S)$ is equivalent to the subject of study in [3]. The importance of divisor and relatively prime graphs is founded in some of the applications in communication networks.

Recall that the \textit{relatively prime graph} of a finite, non-empty set of positive integers $S$ is the graph $G_p(S)$ with vertex set $V(G_p(S)) = \{v_i:i\in S\}$ and edge set $E(G_p(S)) = \{v_iv_j: \gcd(i,j)=1\}$ (see \cite{CMSZ}). Note that $1$ is relatively prime to all positive integers and any integer is divisible by $1$. On the other hand, a positive integer $a\geq 2$ is not relatively prime to itself. Therefore, no loops can be found in $G_p(S)$. A comprehensive study on the classification of divisor graphs which includes interesting results related to the complement of graphs and induced neighbourhoods, has been done in \cite{CF1}. An interesting conjecture that for $S=\{1,2,3,\ldots,n\}$, every tree of order $n$ is a subgraph of the relatively prime graph, $G_p(S)$ was verified for $n\leq 15$ in \cite{FH1}. A specialised \textit{relatively prime graph} has been defined in \cite{SS1}. For a given integer $n\in \N$, the \textit{Euler Phi graph}, denoted by $G_p(S_\phi(n))$, is a relatively prime graph on the set of positive integers $S_\phi(n) = \{i: \gcd(i, n)=1, 1\leq i\leq n\}$ with vertex set $V(G_p(S_\phi(n)))=\{v_i:i\in S_\phi(n)\}$. Note that the vertices are not necessarily consecutively indexed. The notation used in this paper differs slightly from that introduced in \cite{SS1} because for $\phi(n)\in \N$ and it is possible to have that, $n_1\neq n_2$ and $\phi(n_1)=\phi(n_2)$.


\section{New Directions}

For the set $S_\phi(n)$, two graphs with the context of this study is possible -- a relatively prime graph as studied in \cite{SS1} and a divisor graph. Let $S'_\phi(n)=S_\phi(n)-\{v_1\}$. If for all pairs of distinct vertices $v_i,v_j \in S'(\phi)$ the integers $i,j$ are either divisible or relatively prime, then it follows that the divisor graph $G_d(S_\phi(n))= \overline{G}_p(S_\phi(n)-\{v_1\}) + v_1$ and the relatively prime graph, $G_p(S_\phi(n))= \overline{G}_d(S_\phi(n)-\{v_1\}) + v_1$. It is clear that for both the aforesaid graphs, $\Delta = \phi(n)-1$.

Let $\mathcal{P}$ be the set of prime numbers. Also, let $\mathcal{P}(k)$ denote the first $k$ consecutive prime numbers. We know that if a positive integer $n$ is written as some product of $k$ prime numbers, $n= p_i^{a_1}\cdot p_j^{a_2}\cdots p_t^{a_k}$, then $\phi(n)=n(1-\frac{i}{p_i})(1-\frac{1}{p_j})\cdots (1-\frac{1}{p_t})$ (see \cite{TMA}). This result will be used throughout with little reference to other identies of the $\phi$-function. This result provides the cardinality of $S_\phi(n)$.

\begin{theorem}\label{Thm-2.1}
A divisor graph $G_d(S_\phi(n))$ is acyclic if and only if the corresponding relatively prime graph $G_p(S_\phi(n))$ is complete. 
\end{theorem}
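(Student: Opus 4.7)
The plan is to prove both implications directly, using the fact that $1 \in S_\phi(n)$ always (since $\gcd(1,n)=1$) and that $v_1$ is therefore adjacent in $G_d$ to every other vertex.

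For the easier direction, assume $G_p(S_\phi(n))$ is complete. Then any two distinct elements $i,j \in S_\phi(n)$ satisfy $\gcd(i,j)=1$. If $i,j \geq 2$, neither can divide the other, because $i\mid j$ with $i\geq 2$ forces $\gcd(i,j)=i\geq 2$. Hence in $G_d(S_\phi(n))$, the only edges are those incident with $v_1$, and $G_d(S_\phi(n))$ is the star $K_{1,\phi(n)-1}$, which is a tree and therefore acyclic.

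For the other direction I would argue by contrapositive: assume $G_p(S_\phi(n))$ is not complete, so there exist distinct $i,j \in S_\phi(n)$ with $d:=\gcd(i,j)\geq 2$. The crucial small observation is that $d \in S_\phi(n)$: indeed any common divisor of $d$ and $n$ would also divide $i$, contradicting $\gcd(i,n)=1$, so $\gcd(d,n)=1$ and $d\leq i \leq n$. Now split into cases. If $d \notin \{i,j\}$, then $1,d,i,j$ are four distinct vertices and the divisibility relations $d\mid i$, $d\mid j$, $1\mid i$, $1\mid j$ yield the 4-cycle $v_1v_iv_dv_jv_1$ in $G_d(S_\phi(n))$. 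If instead $d\in\{i,j\}$, say $d=i$ so that $i\mid j$, the three vertices $1,i,j$ are pairwise adjacent in $G_d(S_\phi(n))$ (via $1\mid i$, $1\mid j$, $i\mid j$), producing a triangle. Either way, $G_d(S_\phi(n))$ contains a cycle, completing the contrapositive.

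The only subtle point — and the step I would flag as the main (though still minor) obstacle — is justifying that the common divisor $d$ actually lies in the vertex set $S_\phi(n)$; without this, the $4$-cycle construction fails. Aside from this, the proof is just the two case distinctions above. Degenerate cases such as $\phi(n)\in\{1,2\}$ are handled automatically, because then $S_\phi(n)$ has at most two elements and $G_d(S_\phi(n))$ and $G_p(S_\phi(n))$ are both acyclic and complete simultaneously.
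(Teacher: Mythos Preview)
Your proof is correct. The forward implication (complete $\Rightarrow$ acyclic) is argued exactly as in the paper: coprimality of all non-$1$ pairs forbids any divisibility edge among them, so $G_d(S_\phi(n))$ is the star centred at $v_1$.

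For the reverse implication the paper simply writes ``the converse result follows by similar reasoning,'' implicitly relying on the fact that acyclicity forces $G_d(S_\phi(n))$ to be the star $S_{1,\phi(n)-1}$ (since $v_1$ is universal), hence no divisibility edge exists among the remaining vertices. Your contrapositive is more explicit and in fact supplies a step the paper's sketch omits: the absence of a divisibility edge between $v_i$ and $v_j$ alone does \emph{not} imply $\gcd(i,j)=1$ (e.g.\ $i=6$, $j=10$). Your observation that $d=\gcd(i,j)$ again belongs to $S_\phi(n)$, together with the resulting $4$-cycle $v_1v_iv_dv_jv_1$ (or triangle when $d\in\{i,j\}$), is exactly what closes that gap. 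So your argument is slightly longer but genuinely more complete than the paper's.
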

\begin{proof}
Let a relatively prime graph $G_p(S_\phi(n))$ be complete. Hence, besides the vertex $v_1$, all other pairs of distinct vertices are relatively prime so they are not divisors. Therefore, in the corresponding divisor graph only $v_1$ is adjacent to all vertices which renders a star graph. So, $G_d(S_\phi(n))$ is acyclic.

Let a divisor graph, $G_d(S_\phi(n))$ be acyclic. The converse result follows by similar reasoning.
\end{proof}

We recall the next well known lemma \cite{ES1}.

\begin{lemma}{\rm \cite{ES1}}\label{Lem-2.2}
\begin{enumerate}
\item[(i)] Any positive, non-prime integer, $n\geq 4$, is a multiple of at least one prime number, and all of which are less than $n$.\\
\item[(ii)] $\prod\limits_{i=1}^{k}p_i < p^2_{k+1}$ for $p_i \in \mathcal{P}(k)$.
\end{enumerate}
\end{lemma}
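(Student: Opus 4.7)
The plan is to handle the two parts by separate elementary arguments. For part (i), the approach is essentially a definition chase. Let $n\ge 4$ be composite and let $p$ be any prime divisor of $n$ (which exists by the fundamental theorem of arithmetic). Writing $n=p\cdot m$, compositeness forces $m\ge 2$, so $p=n/m\le n/2<n$. Since $p$ was an arbitrary prime divisor, every prime factor of $n$ lies strictly below $n$, which is precisely what (i) claims.

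For part (ii), I would attempt induction on $k$, using Bertrand's postulate as the driver. The base cases $k=1,2,3$ are immediate by direct computation: $2<9$, $2\cdot 3=6<25$, and $2\cdot 3\cdot 5=30<49$. For the inductive step, I would assume $\prod_{i=1}^{k-1}p_i<p_k^{2}$, multiply both sides by $p_k$ to obtain $\prod_{i=1}^{k}p_i<p_k^{3}$, and then attempt to close by comparing $p_k^{3}$ against $p_{k+1}^{2}$ through a prime-gap estimate such as Bertrand's bound $p_{k+1}<2p_k$.

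The principal obstacle is exactly this closure: the step needs $p_k^{3}\le p_{k+1}^{2}$, which demands $p_{k+1}\ge p_k^{3/2}$, whereas Bertrand's postulate gives only the upper bound $p_{k+1}<2p_k$. These two become incompatible as soon as $p_k\ge 5$, so the naive Bertrand-driven induction stalls beyond the first handful of primes. To push the argument through for all $k$, I would expect to need either a sharper prime-gap inequality, a more delicate multiplicative rearrangement of the product $\prod_{i=1}^{k}p_i$, or the structural identity recorded in the cited source \cite{ES1}. I anticipate this inductive closure to be the main technical hurdle of the lemma; part (i) is essentially a warm-up that requires no machinery beyond the definition of compositeness.
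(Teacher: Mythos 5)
The paper offers no proof of this lemma at all; it is simply recalled from \cite{ES1}, so there is nothing internal to compare your argument against. Your part (i) is correct and complete: any prime divisor $p$ of a composite $n\ge 4$ satisfies $p=n/m$ with $m\ge 2$, hence $p<n$.

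For part (ii), your diagnosis that the Bertrand-driven induction cannot close is right, but the obstruction is more fundamental than a missing prime-gap estimate: the inequality as stated is false for every $k\ge 4$. Take $k=4$: $\prod_{i=1}^{4}p_i = 2\cdot 3\cdot 5\cdot 7 = 210$, while $p_5^2 = 11^2 = 121$, so $\prod_{i=1}^{4}p_i > p_5^2$. In fact Bonse's inequality --- the result actually established in Erd\H{o}s--Sur\'anyi --- asserts the \emph{reverse} bound $\prod_{i=1}^{k}p_i > p_{k+1}^2$ for all $k\ge 4$, and the disparity only grows with $k$. Your base cases $k=1,2,3$ are precisely the only values for which the stated inequality holds, which is why no sharper prime-gap input, rearrangement of the product, or appeal to the cited source can rescue the inductive step. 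A single numerical check at $k=4$ would have settled the matter before you went hunting for stronger machinery. Note also that this defect propagates: the proof of Theorem \ref{Thm-2.3} invokes Lemma \ref{Lem-2.2}(ii) to conclude $p_{k+1}^2 > n$ for $n = t\cdot\prod_{i=1}^{k}p_i$, a deduction that is unsupported once $k\ge 4$.
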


For a positive integer $n$ let the prime divisor (or factor) set of $n$ be, $S_\vartheta(n) =\{p_i:p_i| n\}$.

\begin{theorem}\label{Thm-2.3}
For any positive integer $n\ge 5$, the divisor graph $G_d(S_\phi(n))$ is acyclic if and only if $S_\vartheta(n)= \mathcal{P}(k)$, for some $k\in \N$. 
\end{theorem}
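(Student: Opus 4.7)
The plan is to reduce acyclicity of $G_d(S_\phi(n))$ to a purely number-theoretic condition and then analyse that condition. First I would observe that the vertex $v_1$ is adjacent to every other vertex of $G_d(S_\phi(n))$, because $1$ divides every positive integer, so $G_d(S_\phi(n))$ is always connected with $v_1$ as a universal vertex. A connected graph on at least three vertices that contains a universal vertex is a tree only when it is the star $K_{1,\phi(n)-1}$, i.e.\ only when no edge exists between two vertices other than $v_1$. Consequently, $G_d(S_\phi(n))$ is acyclic if and only if $S_\phi(n)\setminus\{1\}$ is an antichain under divisibility, and the whole problem reduces to showing that this antichain property is equivalent to $S_\vartheta(n)=\mathcal{P}(k)$.

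For the forward implication, assume $S_\vartheta(n)=\mathcal{P}(k)$. Any $i\in S_\phi(n)\setminus\{1\}$ is coprime to each of $p_1,\dots,p_k$, so every prime factor of $i$ is at least $p_{k+1}$, whence $i\ge p_{k+1}$. If $a<b$ both lie in $S_\phi(n)\setminus\{1\}$ with $a\mid b$, then $m:=b/a\ge 2$ also divides $b$ and is therefore coprime to $n$, so by the same reasoning $m\ge p_{k+1}$. This gives $b=am\ge p_{k+1}^{2}$, which together with $b\le n$ forces $n\ge p_{k+1}^{2}$. Applying Lemma~\ref{Lem-2.2}(ii) in the form $n\le\prod_{i=1}^{k}p_{i}<p_{k+1}^{2}$ produces a contradiction, and the antichain property follows.

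For the converse I would argue contrapositively. Suppose $S_\vartheta(n)\ne\mathcal{P}(k)$ for every $k$; then there is a smallest prime $q=p_{j+1}$ that does not divide $n$, and because $S_\vartheta(n)$ is not an initial segment of $\mathcal{P}$, some prime $p>q$ must still divide $n$. The key claim is $q^{2}\le n$. When $j=0$ this is immediate from $n\ge 5>4$. When $j\ge 1$ the divisibility $p_{1}\cdots p_{j}\cdot p\mid n$ together with $p\ge p_{j+2}>p_{j+1}$ and the elementary bound $p_{1}\cdots p_{j}\ge p_{j+1}$ (checked by hand for $j=1$ and following from Bertrand's postulate for $j\ge 2$) gives $n\ge p_{j+1}\cdot p_{j+2}>q^{2}$. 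Hence both $q$ and $q^{2}$ lie in $S_\phi(n)\setminus\{1\}$, and the triangle on $v_{1},v_{q},v_{q^{2}}$ exhibits a cycle in $G_d(S_\phi(n))$.

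The delicate step is the forward direction. Lemma~\ref{Lem-2.2}(ii) controls only the bare primorial $\prod_{i=1}^{k}p_{i}$, whereas the hypothesis $S_\vartheta(n)=\mathcal{P}(k)$ permits $n$ to carry arbitrarily large powers of $p_{1},\dots,p_{k}$. For such $n$ one could in principle have $n\ge p_{k+1}^{2}$, in which case $p_{k+1}$ and $p_{k+1}^{2}$ would both belong to $S_\phi(n)$ and reintroduce a triangle. So the sketch above is really a statement about $n$ equal to the $k$-th primorial, and pinning down precisely which hypothesis on the exponents of $n$ makes the forward direction actually go through is the main obstacle to a clean write-up.
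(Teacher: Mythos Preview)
Your reduction of acyclicity to the antichain condition on $S_\phi(n)\setminus\{1\}$ is cleaner than the paper's treatment, and your contrapositive for the converse is essentially the paper's Subcases~1(a)--(b) done more carefully. One small slip: the inequality $p_1\cdots p_j\ge p_{j+1}$ fails at $j=1$ (since $2<3$), but in that case $n$ is divisible by $2$ and by some prime $\ge 5$, so $n\ge 10>9=q^2$ anyway and the conclusion survives.

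More importantly, the obstacle you flag in the forward direction is not a gap in your write-up but a genuine defect in the theorem as stated. The paper's proof handles the primorial case $n=\prod_{i=1}^{k}p_i$ exactly as you do (its Case~1), and then disposes of arbitrary multiples $n=t\cdot\prod_{i=1}^{k}p_i$ in a one-line ``Case~2'' claiming the result ``follows through immediate induction by similar reasoning''. That step is simply wrong: take $n=16$, so that $S_\vartheta(16)=\{2\}=\mathcal{P}(1)$, yet $S_\phi(16)=\{1,3,5,7,9,11,13,15\}$ contains $3$ and $9$, and $v_1,v_3,v_9$ form a triangle in $G_d(S_\phi(16))$. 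So the forward implication fails, and your instinct that Lemma~\ref{Lem-2.2}(ii) only controls the bare primorial is exactly right. The correct statement needs the extra hypothesis $n<p_{k+1}^{2}$ (equivalently, the condition in the paper's Theorem~2.5), under which your argument for the forward direction goes through verbatim.
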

\begin{proof}
If $S_\vartheta(n)=\mathcal{P}(k)$ for some $k\in \N$ then, $n=t\cdot \prod\limits_{i=1}^{k}p_i$, $t\in \N$. Two cases must be considered.

\textit{Case 1}: For $t=1$ and for all non-prime $m, 1\leq m\leq p_k+1$, $\gcd(m, n)\neq 1$ because from Lemma \ref{Lem-2.2}(i) it follows that, $m$ is a product of prime numbers in $\mathcal{P}(k)$. Similarly, $\forall~t$, $p_k+1< t \leq n$ and non-prime, we have $t \notin S_\phi(n)$ because from Lemma \ref{Lem-2.1}(ii), $p^2_{k+1}>n$ and hence $\gcd(t,n)\neq 1$. Therefore, the set $S_\phi(n) = \{1\}\cup \{p_j: (k+1)\leq j\leq \ell,~p_\ell <n\}$. Since all pairs of distinct prime numbers are non-divisors to each other it follows that $G_d(S_\phi(n))$ is the star graph $S_{1,(\ell -k)}$. 

Now, if $G_d(S_\phi(n))$ is acyclic and since $1| z,\ \forall\, z\in \N$, it follows that $G_d(S_\phi(n)\cong S_{1,(\phi(n)-1)}$. Further for all pairs of distinct $a,b \in S_\phi(n), a\neq 1, b\neq 1$, it follows that, $\gcd(a,b)= 1$. We prove two subcases.

\textit{Subcase 1(a)}: If $n$ is odd. For $n \geq5$ it follows easily that $2,4 \in S_\phi(n)$ hence, $G_d(S_\phi(n))$ is cyclic. Also $S_\vartheta(n)\neq \mathcal{P}(k)$ for any $k$ because $2\notin S_\vartheta(n)$. Hence, $n$ cannot be odd.

\textit{Subcase 1(b)}: If $n\geq 4$ and even, no product of a combination of elements in $S_\vartheta(n)$ or a multiple thereof can be an element in $S_\phi(n)$ else, a triangle exists in $G_d(S_\phi(n))$. Therefore, all elements in $S_\phi(n)$ are prime numbers, $p_{k+1}\leq p_j < n$ for some $k\in \N$. If not, a triangle must exist which is a contradiction. The aforesaid implies that, $S_\vartheta(n)=\mathcal{P}(k)$ for some $k\in \N$.

\textit{Case 2:} For $t\geq 2$ the result follows through immediate induction by similar reasoning found in Case-1.
\end{proof}

The lower bound on $n$ in Theorem \ref{Thm-2.3} follows from the fact that for $1\leq n\leq 4$ it is easy to verify that $G_d(S_\phi(n))$ is acyclic. Also, $n=3$ is the only exception to Subcase i(a).

\begin{example}{\rm 
When $n=24$, the prime divisor set $S_\vartheta(24)=\{2,3\}= \{2,3\}=\mathcal{P}(2)$, the divisor graph $G_d(S_\phi(24))$ is cyclic. Clearly, $S_\phi(24)=\{1,5,7,11,13,17,19,23\}$. See Figure \ref{fig:fig-1}. 

\begin{figure}[h!]
\centering
\begin{tikzpicture}[auto,node distance=2.5cm, thick,main node/.style={circle,draw,font=\sffamily\Large\bfseries}]
\vertex [circle,fill] (1) at (0:0) [label={left:$v_1$}]{};
\vertex [circle,fill] (2) at (60:3) [label={above:$v_5$}]{};
\vertex [circle,fill] (3) at (40:4) [label={right:$v_7$}]{};
\vertex [circle,fill] (4) at (20:5) [label={right:$v_{11}$}]{};
\vertex [circle,fill] (5) at (0:5.5) [label={right:$v_{13}$}]{};
\vertex [circle,fill] (6) at (340:5) [label={right:$v_{17}$}]{};
\vertex [circle,fill] (7) at (320:4) [label={right:$v_{19}$}]{};
\vertex [circle,fill] (8) at (300:3) [label={right:$v_{23}$}]{};
\path[every node/.style={font=\sffamily\small},line width=0.75pt]
(1) edge (2)
(1) edge (3)
(1) edge (4)
(1) edge (5)
(1) edge (6)
(1) edge (7)
(1) edge (8)
;
\end{tikzpicture}
\caption{}\label{fig:fig-1}
\end{figure}

Since, for $n=15$ the prime divisor set, $S_\vartheta(9)=\{3,5\}\neq \{2,3,5\}=\mathcal{P}(3)$, the divisor graph, $G_d(S_\phi(15))$ is cyclic. $S_\phi(15) = \{ 1,2,4,6,7,8,11,13,14\}$. See Figure \ref{fig:fig-2}.

\begin{figure}[h!]
\centering
\begin{tikzpicture}[auto,node distance=2.5cm, thick,main node/.style={circle,draw,font=\sffamily\Large\bfseries}]
\vertex [circle,fill] (1) at (0:0) [label={left:$v_1$}]{};
\vertex [circle,fill] (2) at (55:3) [label={above:$v_2$}]{};
\vertex [circle,fill] (3) at (40:4.5) [label={right:$v_4$}]{};
\vertex [circle,fill] (4) at (25:5) [label={right:$v_6$}]{};
\vertex [circle,fill] (5) at (10:5.5) [label={right:$v_7$}]{};
\vertex [circle,fill] (6) at (355:5.5) [label={right:$v_8$}]{};
\vertex [circle,fill] (7) at (340:5) [label={right:$v_{11}$}]{};
\vertex [circle,fill] (8) at (325:4.5) [label={right:$v_{13}$}]{};
\vertex [circle,fill] (9) at (310:3) [label={below:$v_{14}$}]{};
\path[every node/.style={font=\sffamily\small},line width=0.75pt]
(1) edge (2)
(1) edge (3)
(1) edge (4)
(1) edge (5)
(1) edge (6)
(1) edge (7)
(1) edge (8)
(1) edge (9)
(2) edge (3)
(2) edge (4)
(2) edge (6)
(2) edge (9)
(3) edge (6)
(5) edge (9)
;
\end{tikzpicture}
\caption{}\label{fig:fig-2}
\end{figure}

}\end{example}

The following corollary is a direct consequence of Theorem \ref{Thm-2.3}.

\begin{corollary}
If $G_d(S_\phi(n))$ is acyclic, then $n$ is even.
\end{corollary}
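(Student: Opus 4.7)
The plan is to derive this corollary as an almost immediate consequence of Theorem~\ref{Thm-2.3}. Assuming $n\geq 5$, acyclicity of $G_d(S_\phi(n))$ forces $S_\vartheta(n)=\mathcal{P}(k)$ for some $k\in\N$. Since $S_\vartheta(n)$ is the set of prime divisors of $n$ and $n\geq 5$, the set $S_\vartheta(n)$ is non-empty, so $k\geq 1$. By definition, $\mathcal{P}(k)$ consists of the first $k$ consecutive primes, which always includes $p_1=2$. Therefore $2\in S_\vartheta(n)$, which means $2\mid n$, i.e.\ $n$ is even.

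For completeness, the plan would also briefly address the range $1\leq n\leq 4$ noted in the remark following Theorem~\ref{Thm-2.3}. In this range $G_d(S_\phi(n))$ is trivially acyclic, and $n=1,3$ are genuine exceptions to the statement (with $S_\phi(1)=\{1\}$ and $S_\phi(3)=\{1,2\}$ both yielding acyclic graphs for odd $n$); the corollary is therefore to be read with the same hypothesis $n\geq 5$ as in Theorem~\ref{Thm-2.3}. There is no genuine obstacle beyond recording the non-emptiness of $S_\vartheta(n)$, which guarantees $k\geq 1$ and hence the presence of the prime $2$ in $\mathcal{P}(k)$.
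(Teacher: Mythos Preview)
Your proposal is correct and follows exactly the route the paper intends: the corollary is stated as a direct consequence of Theorem~\ref{Thm-2.3}, and your argument that $S_\vartheta(n)=\mathcal{P}(k)$ forces $2\mid n$ is precisely the intended deduction. Your handling of the small cases $1\leq n\leq 4$ also matches the paper's remark following Theorem~\ref{Thm-2.3}.
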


\begin{theorem}
For $n\in \N$, let $x= \min\{y:y\in S_\phi(n)-\{1\}\}$. The divisor graph $G_d(S_\phi(n))$ is acyclic if and only if $n\leq x^2$.
\end{theorem}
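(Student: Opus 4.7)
The plan is to exploit the special role of the vertex $v_1$. Since $1$ divides every positive integer, $v_1$ is adjacent to every other vertex of $G_d(S_\phi(n))$; consequently any divisibility edge between two vertices of $S_\phi(n)\setminus\{v_1\}$ closes a triangle through $v_1$. Thus the theorem reduces to the equivalent statement that $n\leq x^2$ holds if and only if no two distinct integers $a,b \in S_\phi(n)\setminus\{1\}$ satisfy $a\mid b$. A preliminary observation worth recording is that $x$ must be prime: if $x$ factored as $ab$ with $1<a,b<x$, then $\gcd(a,n)=1$ would force $a\in S_\phi(n)\setminus\{1\}$, violating the minimality of $x$. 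So $x$ is simply the smallest prime not dividing $n$.

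For the forward direction, I would assume $n\leq x^2$ and argue by contradiction. Suppose there exist distinct $a,b\in S_\phi(n)\setminus\{1\}$ with $a\mid b$ and $a<b$, and write $b=ak$ with $k\geq 2$. Since $k\mid b$ and $\gcd(b,n)=1$, one has $\gcd(k,n)=1$, so $k\in S_\phi(n)\setminus\{1\}$ and therefore $k\geq x$. Likewise $a\geq x$, giving $b=ak\geq x^2\geq n$. Combined with $b\leq n$, this forces $b=n$, which contradicts $\gcd(b,n)=1$ whenever $n\geq 2$.

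For the converse, assuming $n>x^2$, I would exhibit an explicit triangle. Since $x$ is prime and $\gcd(x,n)=1$, also $\gcd(x^2,n)=1$, and the hypothesis $x^2<n$ places $x^2$ in the range $\{1,\dots,n\}$, so $v_{x^2}\in V(G_d(S_\phi(n)))$. The relation $x\mid x^2$ then yields the edge $v_xv_{x^2}$, and together with $v_1$ this gives a triangle, proving that $G_d(S_\phi(n))$ is cyclic. The trivial cases $n\in\{1,2\}$, where $S_\phi(n)\setminus\{1\}$ is empty and $x$ is undefined, are vacuous and can be noted separately.

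The only real subtlety is the boundary case $n=x^2$: here one must check that $x^2\notin S_\phi(n)$ (which is immediate since $\gcd(x^2,x^2)=x^2\neq 1$), ensuring that the forward argument does cover equality. Beyond this, the crucial conceptual step is the reformulation via the universal neighbor $v_1$; once made, the remaining work is a short arithmetic estimate on the smallest possible product of two coprime-to-$n$ integers exceeding $1$.
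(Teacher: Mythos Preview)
Your argument is correct and complete. The reformulation through the universal vertex $v_1$ is exactly the right move, the forward-direction estimate $b=ak\ge x^2\ge n$ is clean, and the explicit triangle $v_1,v_x,v_{x^2}$ for the converse is the natural witness. The observation that $x$ is necessarily the smallest prime not dividing $n$ is handled correctly.

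Compared with the paper, your proof is more self-contained. The paper first invokes its earlier Corollary (that acyclicity forces $n$ even), which in turn rests on Theorem~\ref{Thm-2.3}, in order to restrict attention to even $n$ and hence to $x\ge 3$ prime; it then asserts that under $n\le x^2$ every element of $S_\phi(n)\setminus\{1\}$ is prime, and declares the converse ``obvious''. You bypass the dependence on Theorem~\ref{Thm-2.3} entirely by proving directly that $x$ is prime and carrying out the product bound without parity assumptions, and you supply the converse construction explicitly. The underlying arithmetic is the same in both proofs (a non-trivial product of two elements of $S_\phi(n)\setminus\{1\}$ must be at least $x^2$), but your version stands on its own, whereas the paper's leans on the previous characterisation. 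Your remark about the boundary case $n=x^2$ is harmless but in fact vacuous: since $x\in S_\phi(n)$ forces $\gcd(x,n)=1$, the equality $n=x^2$ cannot occur.
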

\begin{proof}
For $n\in \N$ let $x= \min\{y:y\in S_\phi(n)-\{1\}\}$. From Corollay 1.4 it follows that if suffices to only consider $n$ is even. Therefore, $x\geq 3$ is a prime number. Also since, all elements of $S_\phi(n)-\{1\}$ are either prime numbers or multiples of prime numbers, it follows that if $n\leq x^2$. Hence, all elements of $S_d(\phi(n))-\{1\}$ are prime. Therefore, $G_d(S_\phi(n))$ is acyclic.

Next, if $G_d(S_\phi(n))$ is acyclic, the converse is obvious.
\end{proof}

The next corollary is a useful summary of equivalent results.

\begin{corollary}
For any $n \in \N$, $n \geq 5$ it follows that $S_\vartheta(n)= \mathcal{P}(k)$, for some $k\in \N$ if and only if:
\begin{enumerate}\itemsep0mm
\item[(i)] $G_d(S_\phi(n))$ is acyclic.
\item[(ii)] $G_p(S_\phi(N))$ is complete.
\item[(iii)] $n\leq x^2$ for $x= \min\{y:y\in S_\phi(n)-\{1\}\}$.
\end{enumerate}
\end{corollary}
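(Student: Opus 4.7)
The plan is to obtain the four-way equivalence by stringing together the three equivalences already established in this section. Once we show that the prime divisor condition $S_\vartheta(n)=\mathcal{P}(k)$ is equivalent to (i), and that (i) is in turn equivalent to each of (ii) and (iii), ordinary transitivity delivers all of the required implications and the corollary is complete.

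Concretely, I would begin by invoking Theorem \ref{Thm-2.3}, which, for $n\geq 5$, states exactly that $S_\vartheta(n)=\mathcal{P}(k)$ for some $k\in\N$ if and only if $G_d(S_\phi(n))$ is acyclic; this settles the equivalence with (i) immediately. Next, Theorem \ref{Thm-2.1} says that $G_d(S_\phi(n))$ is acyclic if and only if $G_p(S_\phi(n))$ is complete, which supplies the equivalence with (ii); no extra hypothesis on $n$ is needed for this step. Finally, the theorem stated just before this corollary gives that $G_d(S_\phi(n))$ is acyclic if and only if $n\leq x^2$, with $x=\min\{y:y\in S_\phi(n)-\{1\}\}$, yielding the equivalence with (iii).

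There is no substantive obstacle; the corollary is a consolidation of results already proved, and the argument is essentially one of transitivity. The only bookkeeping concern is to confirm that the hypothesis $n\geq 5$ is sufficient to legitimately apply each cited statement. This is straightforward: Theorem \ref{Thm-2.3} is stated precisely under this bound, Theorem \ref{Thm-2.1} carries no restriction, and the $x^2$-theorem rests on the preceding corollary that $G_d(S_\phi(n))$ acyclic forces $n$ to be even, which is itself derived via Theorem \ref{Thm-2.3} under the same bound. Hence the chain of equivalences closes up cleanly and uniformly for all $n\geq 5$.
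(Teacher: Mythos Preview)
Your proposal is correct and matches the paper's own treatment: the corollary is stated there as ``a useful summary of equivalent results'' with no separate proof, precisely because it is the transitive combination of Theorem~\ref{Thm-2.1}, Theorem~\ref{Thm-2.3}, and the $x^2$-theorem that you cite. Your bookkeeping on the hypothesis $n\geq 5$ is also in order.
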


\begin{proposition}
For $n\geq 5$ and $S_\vartheta(n) = \mathcal{P}(k)$ for some $k\in \N$, then
\begin{enumerate}\itemsep0mm
\item[(i)] $\varepsilon(G_d(S_\phi(n)))=|\{p_i:(k+1)\leq i\leq \ell, p_\ell<n\}|=\Delta$.
\item[(ii)] $\varepsilon(G_p(S_\phi(n)))=\frac{1}{2}\Delta (\Delta +1)$.
\end{enumerate}
\end{proposition}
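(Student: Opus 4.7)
The plan is to pin down the structure of both graphs using Theorem \ref{Thm-2.3} and Theorem \ref{Thm-2.1}, after which both edge counts are immediate and the proposition reduces to counting the edges of a star and of a complete graph.

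For (i), I would begin by quoting the description of $S_\phi(n)$ extracted from the proof of Theorem \ref{Thm-2.3} (Case 1, and extended to $t\geq 2$ by the induction of Case 2): under the hypothesis $S_\vartheta(n)=\mathcal{P}(k)$, one has
$$S_\phi(n)=\{1\}\cup\{p_j:\,k+1\leq j\leq \ell,\ p_\ell<n\}.$$
Thus every non-unit element of $S_\phi(n)$ is a prime strictly larger than $p_k$. Since $1$ divides every positive integer and no two distinct primes divide one another, the divisor graph $G_d(S_\phi(n))$ is precisely the star $S_{1,\,\ell-k}$ centred at $v_1$. Counting the leaf-edges then yields
$$\varepsilon(G_d(S_\phi(n)))=|\{p_i:\,k+1\leq i\leq \ell,\ p_\ell<n\}|=\phi(n)-1=\Delta,$$
where the last equality uses the identity $\Delta=\phi(n)-1$ already recorded at the start of Section~2.

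For (ii), I invoke Theorem \ref{Thm-2.1}: since $G_d(S_\phi(n))$ is acyclic (by Theorem \ref{Thm-2.3} applied to the same hypothesis), the corresponding relatively prime graph $G_p(S_\phi(n))$ is complete on $|S_\phi(n)|=\phi(n)=\Delta+1$ vertices. Hence
$$\varepsilon(G_p(S_\phi(n)))=\binom{\Delta+1}{2}=\tfrac{1}{2}\Delta(\Delta+1),$$
which is the claimed identity.

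There is no genuine obstacle here: the only step that requires any care is the identification of the set $S_\phi(n)$ from the proof of Theorem \ref{Thm-2.3}, since one must be sure that no composite lies in $S_\phi(n)$ (otherwise $G_d$ would contain a triangle and $G_p$ would fail to be complete). Once that structural description is in place, both parts are routine: (i) is a leaf count in a star, and (ii) is the edge count of $K_{\Delta+1}$.
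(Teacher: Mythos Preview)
Your proposal is correct and follows the same approach as the paper: for (i) you cite the star structure $S_{1,\ell-k}$ extracted from the proof of Theorem~\ref{Thm-2.3}, and for (ii) you use that $G_p(S_\phi(n))$ is complete on $\Delta+1$ vertices. The paper's own proof is simply a terser version of exactly this argument, stating each structural fact and declaring the edge count ``obvious.''
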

\begin{proof}
\item[(i)] Since $G_d(S_\phi(n))$ is the star graph, $S_{1,(\ell -k)}$, by the proof of Theorem \ref{Thm-2.3} the result is obvious.
\item[(ii)] Since $G_p(S_\phi(n))$ is of order $\Delta +1$ and is a complete graph, the result is obvious.
\end{proof}

\begin{observation}{\rm A number of trivial observations on these invariants follow immediately.
\begin{enumerate}\itemsep0mm 
\item[(i)] The respective domination numbers are $1$. 
\item[(ii)] The chromatic numbers $\chi(G_d(S_\phi(n)))=2$ and $\chi(G_p(S_\phi(n)))=\Delta+1$. 
\item[(iii)] The clique numbers $\omega(G_d(S_\phi(n)))=2$ and $\omega(G_p(S_\phi(n)))=\Delta+1$. 
\item[(iv)] These graphs $G_d(S_\phi(n))$ and $G_p(S_\phi(n))$ are perfect graphs.
\end{enumerate}
}\end{observation}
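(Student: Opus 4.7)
The plan is to read off all four invariants from the structural identification established in the preceding proposition: under the ambient hypothesis (inherited from Proposition and Theorem \ref{Thm-2.3}) that $S_\vartheta(n)=\mathcal{P}(k)$, the divisor graph $G_d(S_\phi(n))$ is the star $S_{1,\Delta}$ and the relatively prime graph $G_p(S_\phi(n))$ is the complete graph $K_{\Delta+1}$. Once these two shapes are in hand, each of (i)--(iv) follows by a one-line appeal to standard graph theory, so I would write the observation as four bullet lines mirroring the statement.

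First I would dispatch (i). In $S_{1,\Delta}$ the centre $v_1$ is adjacent to every other vertex and thus $\{v_1\}$ is a dominating set; in $K_{\Delta+1}$ every vertex is adjacent to all others, so any singleton is dominating. Hence both domination numbers equal $1$. I would next handle (ii) and (iii) together, since for these two graph classes chromatic and clique numbers coincide. The star $S_{1,\Delta}$ is a tree with at least one edge, so it is bipartite and triangle-free; this gives $\chi(G_d(S_\phi(n)))=\omega(G_d(S_\phi(n)))=2$. For $K_{\Delta+1}$ the whole vertex set is a clique of size $\Delta+1$, forcing $\omega(G_p(S_\phi(n)))=\Delta+1$, and since any proper colouring must assign distinct colours to this clique while $\Delta+1$ colours clearly suffice, $\chi(G_p(S_\phi(n)))=\Delta+1$.

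Finally, for (iv) I would invoke the classical fact that every bipartite graph and every complete graph is perfect; equivalently, neither a star nor a complete graph contains an induced odd cycle of length at least five or the complement of one, so the Strong Perfect Graph Theorem applies. Since we have already identified our two graphs as members of these two perfect classes, perfection is immediate and moreover matches the numerical evidence $\chi=\omega$ on both graphs (and on every induced subgraph, which is again a star or a complete graph).

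The main thing to be careful about is not technical at all but organisational: I must make explicit that the whole observation lives under the blanket hypothesis $S_\vartheta(n)=\mathcal{P}(k)$ used in the preceding proposition, because the specific values $2$ and $\Delta+1$ for the chromatic and clique numbers depend on the graphs being exactly $S_{1,\Delta}$ and $K_{\Delta+1}$. Beyond that, there is no real obstacle; every line of the argument is a textbook consequence of the two structural descriptions already proved.
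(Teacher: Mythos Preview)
Your proposal is correct and matches the paper's approach: the paper presents this as an Observation with no proof, simply declaring the four items to ``follow immediately'' from the preceding Proposition's identification of $G_d(S_\phi(n))$ as the star $S_{1,\Delta}$ and $G_p(S_\phi(n))$ as $K_{\Delta+1}$. Your write-up just makes those one-line deductions explicit, including the important remark that the ambient hypothesis $S_\vartheta(n)=\mathcal{P}(k)$ is in force.
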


\section{Derivative Set-graphs}

The notion of a set-graph was introduced in \cite{KCSS} as explained below.

\begin{definition}\label{Defn-3.1}{\rm \cite{KCSS} 
Let $A^{(n)} = \{a_1,a_2,a_3,\ldots,a_n\}$, $n\in \N$ be a non-empty set and the $i$-th $s$-element subset of $A^{(n)}$ be denoted by $A^{(n)}_{s,i}$. Now, consider $\mathcal S = \{A^{(n)}_{s,i}: A^{(n)}_{s,i} \subseteq A^{(n)}, A^{(n)}_{s,i} \neq \emptyset \}$. The \textit{set-graph} corresponding to set $A^{(n)}$, denoted by $G_{A^{(n)}}$, is defined to be the graph with $V(G_{A^{(n)}}) = \{v_{s,i}: A^{(n)}_{s,i} \in \mathcal S\}$ and $E(G_{A^{(n)}}) = \{v_{s,i}v_{t,j}: A^{(n)}_{s,i} \cap A^{(n)}_{t,j} \neq \emptyset\}$, where $s\neq t$ or $i\neq j$.
}\end{definition}

The largest complete subgraph in the given set-graph $G_{A^{(n)}}$, $n \geq 2$  is $K_{2^{n-1}}$ and the number of such largest complete subgraphs in the given set-graph $G_{A^{(n)}}$, $n \geq2$ is provided in the following proposition.

\begin{proposition}\label{Prop-3.2}{\rm \cite{KCSS}}
The set-graph $G_{A^{(n)}}, n\geq 1$ has exactly $2^{n-1}$ largest complete subgraphs $K_{2^{n-1}}$.
\end{proposition}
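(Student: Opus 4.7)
My plan is to identify cliques in $G_{A^{(n)}}$ with pairwise-intersecting families of non-empty subsets of $A^{(n)}$ and then count the families of maximum size $2^{n-1}$. By Definition~\ref{Defn-3.1}, two vertices are adjacent iff the corresponding subsets share at least one element, so a clique in $G_{A^{(n)}}$ is precisely a pairwise-intersecting family $\mathcal F \subseteq \mathcal S$. The upper bound $|\mathcal F| \le 2^{n-1}$ (already noted in the paragraph preceding the proposition) follows from the classical complementation argument: split the $2^n$ subsets of $A^{(n)}$ into the $2^{n-1}$ complementary pairs $\{S,\ A^{(n)}\setminus S\}$ and observe that, since complementary sets are disjoint, $\mathcal F$ contains at most one member of each pair. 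The pair $\{\emptyset,\ A^{(n)}\}$ contributes $A^{(n)}$ automatically (as $\emptyset\notin\mathcal S$), so a clique of size exactly $2^{n-1}$ picks exactly one representative from each of the $2^{n-1}-1$ remaining pairs together with $A^{(n)}$.

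I would then count the maximum cliques by induction on $n$. The base cases $n=1$ and $n=2$ are direct, yielding $1$ and $2$ maximum cliques respectively. For the inductive step, I would classify a maximum clique $\mathcal F$ of $G_{A^{(n)}}$ according to whether the singleton $\{a_n\}$ lies in $\mathcal F$. If $\{a_n\}\in\mathcal F$, pairwise intersection forces every other member of $\mathcal F$ to contain $a_n$, so $\mathcal F$ coincides with the \emph{principal} family $\mathcal F_{a_n}=\{S \in \mathcal S : a_n\in S\}$, accounting for exactly one clique. If $\{a_n\}\notin\mathcal F$, then $A^{(n)}\setminus\{a_n\}\in\mathcal F$ by complementation, and I would construct a $2$-to-$1$ correspondence between such $\mathcal F$ and the $2^{n-2}$ maximum cliques of $G_{A^{(n-1)}}$ provided by the inductive hypothesis (by restricting $\mathcal F$ to subsets not containing $a_n$ and recording how the complementation choice at $a_n$ is resolved), contributing the remaining $2^{n-1}-1$ cliques and closing the total at $2^{n-1}$.

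The principal obstacle is making the inductive bijection sharp. The key intermediate step is that any maximum intersecting family is upward closed: if $S\in\mathcal F$, $S\subseteq T$, and $T\notin\mathcal F$, then the complement $A^{(n)}\setminus T\in\mathcal F$ is disjoint from $S$, contradicting pairwise intersection. Consequently, $\mathcal F$ is determined by its antichain of minimal elements, and the enumeration reduces to counting admissible antichains in $2^{A^{(n)}}$ together with the constraint of pairwise intersection. The delicate work is in tracking how these antichains lift and restrict between $A^{(n-1)}$ and $A^{(n)}$ and verifying that the resulting count is exactly $2^{n-1}$ with no rogue maximum cliques escaping the inductive construction — this step is where the proof in \cite{KCSS} does the heavy lifting, and where care is most needed to avoid missing or double-counting cliques arising from antichains of middle-sized subsets.
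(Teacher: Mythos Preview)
The paper does not prove Proposition~\ref{Prop-3.2}; it is quoted without argument from \cite{KCSS}. More importantly, the statement is \emph{false} for $n\geq 4$, so no proof strategy can succeed. For $n=4$ there are $12$ cliques of size $2^{3}=8$ in $G_{A^{(4)}}$, not $8$: the four principal families $\{S\in\mathcal S : a_i\in S\}$, together with the eight families obtained by taking $A^{(4)}$, all four $3$-element subsets, and one $2$-subset from each of the complementary pairs
\[
\{\{a_1,a_2\},\{a_3,a_4\}\},\quad \{\{a_1,a_3\},\{a_2,a_4\}\},\quad \{\{a_1,a_4\},\{a_2,a_3\}\}.
\]
Every one of the $2^{3}=8$ selections yields a pairwise-intersecting family of size $8$ (any two $2$-subsets drawn from distinct complementary pairs meet, and any $2$-subset meets every $3$-subset of a $4$-set), and none of these eight coincides with a principal family since they contain no singleton. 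The correct counts $1,2,4,12,81,2646,\ldots$ are the numbers of self-dual monotone Boolean functions of $n$ variables, not $2^{n-1}$.

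Your inductive scheme therefore breaks precisely where you flagged it as ``delicate'': the proposed $2$-to-$1$ correspondence between maximum cliques of $G_{A^{(n)}}$ avoiding $\{a_n\}$ and maximum cliques of $G_{A^{(n-1)}}$ does not exist. Even the bookkeeping is inconsistent before one looks at examples: you need that case to contribute $2^{n-1}-1$ cliques, yet a $2$-to-$1$ map onto $2^{n-2}$ objects would produce $2^{n-1}$. Concretely, for $n=4$ the case $\{a_4\}\notin\mathcal F$ accounts for $11$ of the $12$ maximum families, which bears no clean multiplicative relation to the $4$ maximum families on $A^{(3)}$. Your upper bound $|\mathcal F|\le 2^{n-1}$ via complementary pairs and your upward-closure observation are both correct and standard; it is only the enumeration that fails, and it fails because the target number in the proposition is wrong.
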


The graph in Figure \ref{fig:fig-3} is the set-graph $G_{A^{(3)}}$.

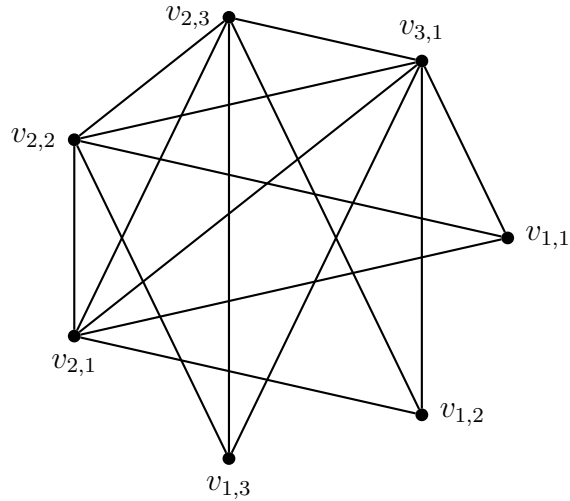
\begin{figure}[h!]
\centering
\begin{tikzpicture}[auto,node distance=1.75cm,
thick,main node/.style={circle,draw,font=\sffamily\Large\bfseries}]
\vertex (v1) at (0:3) [fill,label=right:$v_{1,1}$]{};
\vertex (v2) at (308.57:3) [fill,label=right:$v_{1,2}$]{};
\vertex (v3) at (257.14:3) [fill,label=below:$v_{1,3}$]{};
\vertex (v4) at (205.71:3) [fill,label=below:$v_{2,1}$]{};
\vertex (v5) at (154.28:3) [fill,label=left:$v_{2,2}$]{};
\vertex (v6) at (102.85:3) [fill,label=left:$v_{2,3}$]{};
\vertex (v7) at (51.42:3) [fill,label=above:$v_{3,1}$]{};
\path 
(v1) edge (v4)
(v1) edge (v5)
(v1) edge (v7)
(v2) edge (v4)
(v2) edge (v6)
(v2) edge (v7)
(v3) edge (v5)
(v3) edge (v6)
(v3) edge (v7)
(v4) edge (v5)
(v4) edge (v6)
(v4) edge (v7)
(v5) edge (v6)
(v5) edge (v7)
(v6) edge (v7)
;
\end{tikzpicture}
\caption{\small The set-graph $G_{A^{(3)}}$.}\label{fig:fig-3}
\end{figure}

\begin{example}{\rm 
For $A^{(4)} = \{1,3,5,7\}$. Following from Definition 2.1 set-graph has vertices, $v_{1,1} =\{1\}$, $v_{1,2}=\{3\}$, $v_{1,3}=\{5\}$, $v_{1,4}=\{7\}$, $v_{2,1}=\{1,3\}$, $v_{2,2}=\{1,5\}$, $v_{2,3}=\{1,7\}$, $v_{2,4}=\{3,5\}$, $v_{2,5}=\{3,7\}$, $v_{2,6}=\{5,7\}$, $v_{3,1}=\{1,3,5\}$, $v_{3,2}=\{1,3,7\}$, $v_{3,3}=\{1,5,7\}$, $v_{3,4}=\{3,5,7\}$, $v_{4,1}=\{1,3,5,7\}$. The $\iota$-weights are given by the mapping, $\iota(v_{1,1}) \mapsto 1$, $\iota(v_{1,2})\mapsto 3$, $\iota(v_{1,3})\mapsto 5$, $\iota(v_{1,4})\mapsto 7$, $\iota(v_{2,1})\mapsto 3$, $\iota(v_{2,2})\mapsto 5$, $\iota(v_{2,3})\mapsto 7$, $\iota(v_{2,4})\mapsto 15$, $\iota(v_{2,5})\mapsto 21$, $\iota(v_{2,6})\mapsto 35$, $\iota(v_{3,1})\mapsto 15$, $\iota(v_{3,2})\mapsto 21$, $\iota(v_{3,3})\mapsto 35$, $\iota(v_{3,4})\mapsto 105$, $\iota(v_{4,1})\mapsto 105$. 

Hence, $\iota(A^{(4)}) = \{v_{1,1}(1), v_{1,2}(3), v_{1,3}(5), v_{1,4}(7), v_{2,1}(3),v_{2,2}(5), v_{2,3}(7), v_{2,4}(15),\\ v_{2,5}(21), v_{2,6}(35),v_{3,1}(15), v_{3,2}(21), v_{3,3}(35),v_{3,4}(105), v_{4,1}(105)\}$. 
}\end{example}

\begin{proposition}\label{Prop-3.3}
For a set of positive integers $S =\{a_1,a_2,a_3,\ldots,a_n\}$ with $a_1 < a_2 < a_3 <\cdots < a_n$, we have
\begin{enumerate}\itemsep0mm
\item[(i)] If $a_1\neq 1$  and all pairs of distinct entries are relatively prime then the $\iota$-weights are distinct.
\item[(ii)] If $a_1=1$ and all pairs of distinct entries are relatively prime then the $\iota$-weights repeat twice except for $v_{1,1}$.
\end{enumerate}
\end{proposition}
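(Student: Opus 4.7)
The plan rests on the observation, implicit in the preceding example, that $\iota(v_{s,i})$ is the product of the entries of $A^{(n)}_{s,i}$, which under pairwise coprimality coincides with $\lcm$ of those entries. With this in hand, both statements reduce to the principle that distinct subsets of a set of pairwise coprime integers greater than $1$ have distinct products, which is a consequence of unique prime factorisation.

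For part (i), since every $a_i \geq 2$ and the $a_i$ are pairwise coprime, the prime-divisor sets $P_i = \{p \in \mathcal{P} : p \mid a_i\}$ are nonempty and mutually disjoint. For any nonempty $T \subseteq S$, the set of primes dividing $\prod_{a \in T} a$ is the disjoint union $\bigcup_{a_i \in T} P_i$, from which $T$ can be recovered. Hence no two subsets produce the same product, so all $\iota$-weights are distinct. This step is entirely routine.

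For part (ii), put $S' = S \setminus \{1\}$. Since multiplication by $1$ leaves the product unchanged, the subsets $T$ and $T \cup \{1\}$ have equal $\iota$-weight for every nonempty $T \subseteq S'$. These pairs partition the collection of nonempty subsets of $S$ other than $\{1\}$ itself, and $\{1\}$ corresponds to $v_{1,1}$ with weight $1$. Applying part (i) to $S'$ — whose elements remain pairwise coprime and all satisfy $a_i \geq 2$ — shows that different pairs yield different products. Consequently each $\iota$-weight other than that of $v_{1,1}$ is attained by exactly two vertices.

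The only point where I anticipate wanting a bit of care is verifying that no paired weight accidentally coincides either with another paired weight or with the solitary weight $1$ of $v_{1,1}$. The former is handled by invoking part (i) on $S'$, and the latter is immediate from $\prod T \geq a_2 \geq 2$ for every nonempty $T \subseteq S'$. I do not foresee any substantive obstacle beyond this bookkeeping.
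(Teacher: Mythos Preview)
Your proof is correct and follows essentially the same approach as the paper's: both use that for pairwise coprime entries the $\iota$-weight equals the product of the subset, and for part (ii) both rely on the pairing $T \leftrightarrow T\cup\{1\}$. Your argument is in fact more complete, since you spell out via the disjoint prime-divisor sets $P_i$ why distinct subsets of pairwise coprime integers $\geq 2$ yield distinct products, a step the paper leaves implicit.
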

\begin{proof}
\item[(i)] The result follows immediately from the facts that all subsets are distinct and for any set of relative prime integers $X=\{e_1,e_2,e_3,\ldots,e_t\}$ the least common multiple, $\lcm(e_i\in X)=\prod\limits_{i=1}^{t}e_i$ for all $e_i$.
\item[(ii)] The result follows immediately from the facts that, $1\cdot x=x$ and for each subset $X$, $1\notin X$ there exists a subset $\{1\}\cup X$ except for the unique vertex $v_{1,1}=\{1\}$. Hence, $\iota(v_{1,1})=1$, uniquely. All other $\iota$-weights repeat twice.
\end{proof}

In the context of repetition we say that an $\iota$-weight which is unique, repeats zero times.

\begin{theorem}\label{Thm-3.4}
For a set of positive integers $S =\{a_1,a_2,a_3,\ldots,a_n\}$ with $a_1 < a_2 < a_3 <\cdots < a_n$, all $\iota$-weights repeat an even number of times.
\end{theorem}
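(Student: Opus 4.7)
The approach is to group the vertices by their $\iota$-weight and, for each value $m$ actually attained, exhibit a fixed-point-free involution on the class $\mathcal{X}_m := \{X \subseteq S : X \neq \emptyset,\ \lcm(X) = m\}$ except for at most one orphan vertex. Combined with the preceding convention (``a unique $\iota$-weight repeats zero times''), this will force $|\mathcal{X}_m|$ to be either $1$ or even, which is exactly what the theorem asserts.

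The model case I would write out first is $a_1 = 1$. Here the toggle $\sigma(X) = X \triangle \{1\}$ preserves $\lcm(X)$ (since $1$ divides every integer), has no fixed points (as $X \neq X \triangle \{1\}$), and sends a non-empty subset to a non-empty subset except when $X = \{1\}$. Because any non-empty $X$ other than $\{1\}$ contains an element $\geq 2$ and therefore has $\lcm(X) \geq 2$, the only class affected by this exception is $\mathcal{X}_1 = \{\{1\}\}$, which has size $1$ (unique, repetition zero). For every $m \geq 2$ the map $\sigma$ restricts to a fixed-point-free involution on $\mathcal{X}_m$, so $|\mathcal{X}_m|$ is even. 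This recovers Proposition \ref{Prop-3.3}(ii) as a byproduct, and Proposition \ref{Prop-3.3}(i) simultaneously settles the sub-case $a_1 \geq 2$ with pairwise coprime elements, since there all $\iota$-weights are distinct and hence all unique.

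The remaining case is $a_1 \geq 2$ with the elements of $S$ sharing non-trivial common factors. Here no single element of $S$ is uniformly toggleable, so the toggle $a(X)$ must depend on $X$: a natural attempt is to pick $a(X)$ to be the smallest element of $S$ that is either redundant in $X$ (when $a(X) \in X$, meaning $\lcm(X \setminus \{a(X)\}) = m$) or lies in $S \setminus X$ and divides $m$ (when $a(X) \notin X$, so adjoining $a(X)$ preserves the $\lcm$). The hard part, and the main obstacle of the proof, is to arrange these choices so that $\sigma(X) = X \triangle \{a(X)\}$ really is an involution (one needs $a(\sigma(X)) = a(X)$) and produces at most one orphan in each class $\mathcal{X}_m$. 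A fallback strategy, should this direct pairing resist, is induction on $|S|$: remove $a_n$, split $\mathcal{X}_m$ according to whether $a_n$ is present, and propagate parity from the inductive hypothesis applied to the reduced ground set.
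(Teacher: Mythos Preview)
Your toggle argument for the case $a_1=1$ is correct and is genuinely different from the paper's route: the paper proceeds by induction on $|S|$, partitioning the non-empty subsets of $S\cup\{e_{k+1}\}$ into those not containing $e_{k+1}$, those of the form $\{e_{k+1}\}\cup X$ with $X\neq\emptyset$, and the singleton $\{e_{k+1}\}$, and then appeals to the inductive hypothesis on each piece. Your involution $X\mapsto X\triangle\{1\}$ gives the same conclusion in one line and makes the role of the element $1$ transparent; it also immediately reproduces Proposition~\ref{Prop-3.3}(ii).

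The ``hard part'' you flag, however, is not merely hard---it is impossible, because the theorem as stated is false when $a_1\geq 2$ and the elements are not pairwise coprime. Take $S=\{2,3,6\}$: the non-empty subsets $\{6\},\{2,3\},\{2,6\},\{3,6\},\{2,3,6\}$ all have $\lcm$ equal to $6$, so the $\iota$-weight $6$ occurs five times, which is odd and greater than one. Hence no involution of the kind you sketch, and no induction, can succeed in that generality. The paper's own inductive step has the same defect: it tacitly assumes that $\lcm(\{e_{k+1}\}\cup X)$ depends only on $\lcm(X)$ via multiplication by $e_{k+1}$ and that no collisions occur between the three parts of the partition, both of which fail in the example above.

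For the paper's applications this does not matter: every $S_\phi(n)$ contains $1$, so your $a_1=1$ argument already covers every instance in which Theorem~\ref{Thm-3.4} is actually used (in particular Theorem~\ref{Thm-3.7}). The right fix is simply to add the hypothesis $1\in S$ to the statement, after which your proof is complete as written and the problematic general case disappears.
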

\begin{proof}
We prove the result by mathematical induction on the cardinality of the set $S$. For $S=\{a_1\}$, the corresponding set-graph has the isolated vertex $v_{1,1}$. Therefore, $\iota(v_{1,1})= a_1$. Since the $\iota$-weight repeats zero times, the result holds true when $|S|=1$. For $S=\{a_1,a_2\}$, the corresponding set-graph has vertex set $\{v_{1,1}, v_{1,2}, v_{2,1}\}$ and hence $\iota(V_{1,1})= a_1$, $\iota(V_{1,2})=a_2$ and $\iota(v_{2,1}) = a_1\cdot a_2$. If $a_1=1$, then $\iota(v_{1,2})=\iota(v_{2,1})=a_2$. Hence, the $\iota$-weight $a_1$ repeats zero times and the $\iota$-weight $a_2$ repeats twice. If $a_1\neq 1$ then, $\iota(v_{1,1})=a_1$, $\iota(v_{1,2})=a_2$ and $\iota(v_{2,1})= a_1\cdot a_2$. Clearly, each $\iota$-weight repeats zero times and the result is true for $|S|=2$. 

Now, assume that the result holds for any set $S$ of cardinality $k$. Consider a set of positive integers $S'= \{e_1,e_2,e_3,\ldots,e_k,e_{k+1}\}$. Then for $S= \{e_1,e_2,e_3,\ldots,e_k\}$, the result holds. The vertices of set-graph $G_{S'^{(k+1)}}$ can be partitioned into three subsets namely, $V(G_{S^{(k)}})$, $\{\{e_{k+1}\}\cup v_{s,i}: v_{s,i}\in V(G_{S^{(k)}})\}$ and $\{v_{k+1,1}\}$. Since $k+1 >1$ the $\iota$-weights of the vertices in $\{\{e_{k+1}\}\cup v_{s,i}: v_{s,i}\in V(G_{S^{(k)}})\}$ repeat as $(k+1)$-multiples the equal even number of times as the corresponding vertices in $V(G_{S^{(k)}})$. Finally, $\iota(v_{k+1,1})$ is unique and hence it repeats zero times. In conclusion the results holds for any set of positive integers of cardinality $k+1$. Thus, the result holds for all finite sets of positive integers of cardinality $n\in \N$ by mathematical induction.
\end{proof}

The \textit{first derivative set-graph} is a divisor graph with respect to the LCM values. Hence, vertices $v_{s,i}(j)$ and $v_{t,k}(\ell)$ are adjacent if and only if $j\mid \ell$ or $\ell \mid j$. The \textit{second derivative set-graph} is a relatively prime graph with respect to the LCM values. Hence, vertices $v_{s,i}(j)$ and $v_{t,k}(\ell)$ are adjacent if and only if $\gcd(j,\ell)=1$. These derivative set-graphs will be investigated for the Euler Phi function $\phi(n)$.

\subsection{Euler Phi Set-graphs and derivative set-graphs}

In order to link some aspects of graph theory with the notion of divisor graph, relative prime graphs, Euler Phi set-graphs and alike, we introduce the notion of a set which divides another set. If for two distinct non-empty sets $X$ and $Y$, we have $X\cap Y\neq \emptyset$, we say $X$ divides $Y$ and write it as $X\mid Y$. Clearly it is true that, $X\mid Y\Leftrightarrow Y\mid X$. Also, it is not necessarily true that, $(X\mid Y, Y\mid Z)\Rightarrow X\mid Z$.

The \textit{Euler Phi set-graph} denoted by $G_{S_\phi(n)}$ is obtained by applying Definition \ref{Defn-3.1} to the set $S_\phi(n)$. Clearly, $G_{S_\phi(n)}$ has all the properties of $G_{A^{(\phi(n))}}$ for $A^{(\phi(n))} = \{a_1,a_2,a_3,\ldots,a_{\phi(n)}\}$. Then, we have a straightforward corollary.

\begin{corollary}\label{Cor-3.5}
$G_{S_\phi(n)} \cong G_{S_\phi(m)}$ if and only if $\phi(n)=\phi(m)$.
\end{corollary}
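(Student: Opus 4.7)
The plan is to observe that the set-graph construction in Definition \ref{Defn-3.1} depends only on the cardinality of the underlying set, not on the specific elements. Adjacency is governed purely by whether two subsets have non-empty intersection, which is a combinatorial property invariant under any bijection between the ground sets. So for any two non-empty sets $A$ and $B$ with $|A|=|B|=k$, a bijection $A\to B$ extends to a bijection on the collections of non-empty subsets, and this bijection preserves the intersection relation, giving $G_{A}\cong G_{B}$.

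For the forward direction, suppose $\phi(n)=\phi(m)$. Then $|S_\phi(n)|=\phi(n)=\phi(m)=|S_\phi(m)|$. By the observation above, both set-graphs are isomorphic to the canonical set-graph $G_{A^{(\phi(n))}}$, and hence isomorphic to each other.

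For the converse, suppose $G_{S_\phi(n)}\cong G_{S_\phi(m)}$. Isomorphic graphs have the same order, so I would count vertices. By Definition \ref{Defn-3.1}, the vertices of $G_{S_\phi(n)}$ are in bijection with the non-empty subsets of $S_\phi(n)$, so
\[
\nu(G_{S_\phi(n)}) = 2^{\phi(n)}-1, \qquad \nu(G_{S_\phi(m)}) = 2^{\phi(m)}-1.
\]
Equating and using strict monotonicity of $k\mapsto 2^k-1$ yields $\phi(n)=\phi(m)$.

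I do not anticipate a real obstacle here: the content of the statement is that the set-graph construction is a functor of the cardinality alone, and the vertex-count argument handles the converse via a one-line injectivity check. The only care needed is to explicitly note that the labelling by elements of $S_\phi(n)$ plays no role in the adjacency rule of Definition \ref{Defn-3.1} (only subset intersection does), so that the isomorphism in the forward direction is not blocked by, say, the fact that the elements of $S_\phi(n)$ and $S_\phi(m)$ may be different integers even when $\phi(n)=\phi(m)$.
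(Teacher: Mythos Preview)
Your proof is correct and follows the same idea the paper relies on: the set-graph of Definition \ref{Defn-3.1} depends only on the cardinality of the ground set, so $G_{S_\phi(n)}\cong G_{A^{(\phi(n))}}$, and equality of orders $2^{\phi(n)}-1=2^{\phi(m)}-1$ gives the converse. The paper states this as a straightforward corollary without writing out the details you supply.
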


We denote the Euler Phi $\lcm$-divisor set-graph by, $G_d(\iota(S_\phi(n)))$. Recall that except for $n=1,2$, $\phi(n)$ is even for all $n\in \N/\{1,2\}$. For $n= 1,2$, $\phi(n) = 1$ and for $n=3,4,6$, $\phi(n)=2$ and for $n=5,8,10,12$, $\phi(n) = 4$. Generally the vertex notation $v_{s,i}(\iota)$ will mean the vertex corresponding to the $i^{th}$, $s$-element subset with corresponding $\iota$-weight,

\begin{lemma}\label{Lem-3.6}
An Euler Phi $\lcm$-divisor set-graph $G_d(\iota(S_\phi(n)))$ has exactly three vertices with maximum degree, $\Delta (G_d(\iota(S_\phi(n))))= 2^{\phi(n))}-2$.
\end{lemma}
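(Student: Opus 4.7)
The plan is to exhibit three explicit vertices of $G_d(\iota(S_\phi(n)))$ that attain the absolute maximum possible degree $2^{\phi(n)}-2$, and then to rule out any further vertex of this degree. Since the underlying set-graph $G_{S_\phi(n)}$ has exactly $2^{\phi(n)}-1$ vertices, the bound $\Delta\leq 2^{\phi(n)}-2$ is automatic, and is attained exactly by vertices adjacent to every other vertex. Note a useful preliminary observation: because the adjacency rule in a derivative set-graph reads ``$j\mid\ell$ or $\ell\mid j$'', two distinct vertices that happen to share the same $\iota$-weight are always adjacent, so a vertex of weight $w$ attains the extremal degree if and only if $w$ is divisibility-comparable to every $\iota$-weight realised in $G_d(\iota(S_\phi(n)))$. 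Write $M=\lcm(S_\phi(n))$.

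For the three extremal vertices I would argue: (a) the singleton vertex $v_{1,1}$ for $\{1\}$ has $\iota$-weight $1$, and since $1\mid w$ for every positive integer $w$, it is adjacent to every other vertex; (b) the full-set vertex $v_{\phi(n),1}$ for $S_\phi(n)$ has $\iota$-weight $M$, and $\lcm(T)\mid M$ for every non-empty $T\subseteq S_\phi(n)$, so it too is adjacent to every other vertex; (c) the vertex $v_{\phi(n)-1,j}$ corresponding to $S_\phi(n)\setminus\{1\}$ has $\iota$-weight $\lcm(S_\phi(n)\setminus\{1\})=M$, because removing the identity element $1$ does not alter an LCM. This last identification is exactly the pairing-up of $\iota$-weights exploited in the proof of Theorem~\ref{Thm-3.4}. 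Since this vertex is distinct from $v_{\phi(n),1}$ but carries the same weight $M$, it is again adjacent to every other vertex, yielding a third vertex of degree $2^{\phi(n)}-2$.

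To establish there is no fourth, I would proceed as follows. Suppose $v$ has $\iota$-weight $w$ and degree $2^{\phi(n)}-2$; then $w$ must be divisibility-comparable with every realised weight. Both $1$ and $M$ are realised, so $w\mid M$. The task then reduces to showing (i) that $w\in\{1,M\}$ is forced, by producing, for any intermediate divisor $1<w<M$ of $M$, two realised LCMs incomparable with $w$; and (ii) that exactly one subset of $S_\phi(n)$ has LCM $1$ (namely $\{1\}$) while exactly two subsets have LCM $M$ (namely $S_\phi(n)$ and $S_\phi(n)\setminus\{1\}$).

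The hard part is clause (ii) for weight $M$: I must show that dropping any element $a\in S_\phi(n)\setminus\{1\}$ strictly decreases the LCM, i.e.\ that some prime-power divisor of $a$ appears in no other element of $S_\phi(n)$. This is the crux, and is where the precise arithmetic of the coprime residues $\{i\leq n:\gcd(i,n)=1\}$ must enter, likely via Lemma~\ref{Lem-2.2} to control the ranges of prime powers that can occur simultaneously with, and without, $a$. Parts (a)--(c) and the uniqueness of $\{1\}$ as a weight-$1$ subset are straightforward; everything else in the proof will ride on this redundancy-free property of $S_\phi(n)\setminus\{1\}$.
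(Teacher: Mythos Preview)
Your existence argument in (a)--(c) is exactly the paper's proof: it exhibits $v_{1,1}$ with $\iota$-weight $1$ and the two vertices $v_{\phi(n)-1,\phi(n)}$ and $v_{\phi(n),1}$ sharing the $\iota$-weight $M=\lcm(S_\phi(n))$, and observes that each is adjacent to every other vertex. The paper stops there; it makes no attempt to justify the word ``exactly''.

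Your attempt to go further and prove exactness is where the real issue lies, and you have correctly located the obstruction. The ``redundancy-free'' property you single out in clause~(ii)---that removing any $a\in S_\phi(n)\setminus\{1\}$ strictly lowers the LCM---is in fact false. Take $n=5$, so that $S_\phi(5)=\{1,2,3,4\}$ and $M=12$. Removing $a=2$ leaves $\{1,3,4\}$, whose LCM is still $12$ because $4$ already carries the full power of $2$. Concretely, the four subsets $\{3,4\}$, $\{1,3,4\}$, $\{2,3,4\}$, $\{1,2,3,4\}$ all have $\iota$-weight $12$, so together with $\{1\}$ there are five vertices of degree $2^{4}-2=14$, not three. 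Thus step~(ii) cannot be completed, not because your strategy is misconceived but because the ``exactly three'' assertion is not true in general; the paper's own argument establishes only ``at least three''.
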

\begin{proof}
Since $\iota(v_{1,1})=1$, the vertex $v_{1,1}(1)$ is adjacent to all vertices in the Euler Phi $\lcm$-divisor set-graph. Also, $\iota(v_{\phi(n)-1,\phi(n)}) =\iota(v_{\phi(n),1})=lcm(S_\phi(n)) =max\{\iota$-$weight:over~all~vertices\}$ and $\iota(v_{s,i})\mid lcm(S_\phi(n))$ for all vertices in the Euler Phi $\lcm$-divisor set-graph. Hence, all vertices are adjacent to vertices $v_{\phi(n)-1,\phi(n)}(\iota)$, $v_{\phi(n),1}(\iota)$, respectively. Therefore, the result.
\end{proof}

The following theorem describes the vertex degrees of Euler Phi $\lcm$-divisor set-graph.

\begin{theorem}\label{Thm-3.7}
All vertex degrees of an Euler Phi $\lcm$-divisor set-graph $G_d(\iota(S_\phi(n)))$ are even. 
\end{theorem}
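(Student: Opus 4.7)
The plan is to group the vertices of $G_d(\iota(S_\phi(n)))$ by their $\iota$-weight and exploit the fact that, in a divisor graph defined on weights, whether two vertices are adjacent depends only on their weights. Concretely, for every positive integer $j$ that arises as an $\iota$-weight, set
\[
W_j = \{v \in V(G_d(\iota(S_\phi(n)))) : \iota(v) = j\},
\]
and let $N(j) = \{j' : j \mid j' \text{ or } j' \mid j\}$ be the set of weights comparable to $j$ under divisibility (which of course contains $j$ itself). Then, for a vertex $v$ with $\iota(v) = j$, every other vertex in $W_j$ is a neighbour (since $j \mid j$), and every vertex in any other $W_{j'}$ with $j' \in N(j)$ is also a neighbour. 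Therefore
\[
d(v) \;=\; \Bigl(\,\sum_{j' \in N(j)} |W_{j'}|\Bigr) - 1.
\]

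The key step is a parity analysis of the right-hand side. I would first observe that $|W_1| = 1$, since the only subset of $S_\phi(n)$ whose elements all divide $1$ is $\{1\}$ itself. Next, for any $j > 1$, I would establish that $|W_j|$ is even by the following pairing: every subset $X \subseteq S_\phi(n)$ with $\lcm(X) = j$ satisfies either $1 \in X$ or $1 \notin X$, and the involution
\[
X \;\longleftrightarrow\; X \triangle \{1\}
\]
sends each such $X$ to a different subset with the same least common multiple (because $\lcm(X \cup \{1\}) = \lcm(X)$ whenever $1 \notin X$). This involution is fixed-point free on $W_j$ for $j > 1$, so $|W_j|$ is even. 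This is essentially a sharpening of Theorem \ref{Thm-3.4}, but stated per weight rather than in aggregate.

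With these two observations, the conclusion is immediate: in the sum $\sum_{j' \in N(j)} |W_{j'}|$, the term corresponding to $j' = 1$ contributes $|W_1| = 1$ (and $1 \in N(j)$ for every $j$, since $1 \mid j$), while every other term contributes an even integer. Hence the sum is odd, and subtracting one yields $d(v)$ even.

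The only step requiring real care is the pairing argument, because one needs $1 \in S_\phi(n)$ to be available (which is always true by definition of $S_\phi(n)$) and one needs to check that $X$ and $X \triangle \{1\}$ are distinct non-empty subsets producing the same $\lcm$; the latter fails precisely when $X = \{1\}$, which is why the case $j = 1$ must be separated out. The boundary cases $n = 1, 2$, where $\phi(n) = 1$ and the set-graph is a single isolated vertex of degree $0$, fit trivially into this framework. Nothing else in the argument is delicate; once the pairing is in place, the proof collapses to the parity count above.
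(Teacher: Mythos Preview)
Your proof is correct and rests on the same core idea as the paper's: the involution $X \mapsto X \triangle \{1\}$ forces every $\iota$-weight other than $1$ to occur an even number of times, so that any degree is an odd sum minus one. The paper arrives at this parity via Theorem~\ref{Thm-3.4} and then counts neighbours of a fixed vertex in stages (first the subsets containing a chosen element, then $\{1\}$, then the remaining divisibility relations handled by Theorem~\ref{Thm-3.4}), whereas you bypass Theorem~\ref{Thm-3.4} entirely and collapse everything into the single formula $d(v)=\sum_{j'\in N(j)}|W_{j'}|-1$ together with the per-weight pairing. Your route is more direct and treats every vertex uniformly; the paper's extra case split buys nothing additional.
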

\begin{proof}
Consider a Euler Phi prime set $\{1,p_1,p_2,\ldots,p_t\}$. In the set-graph which has an odd number that is, $2^{t+1}-1$ vertices, the vertex $v_{1,1}(1)$ is adjacent to all vertices thus has even degree. Without loss of generality consider any 1-element subset $\{p_i\}$. The element $p_i$ is an element in exactly $2^t$ subsets hence, vertex $v_{1,i+1}(p_i)$ will be adjacent to the remaining odd number of vertices corresponding to those subsets. Since, vertex $v_{1,i+1}(p_i)$ is adjacent to vertex $v_{1,1}(1)$ it has even degree in respect of the vertices considered thus far. Furthermore, if $\iota(v_{1,i+1})=p_i$, is a divisor of $\iota(v_{s,k}(j))$ which from Theorem \ref{Thm-3.4} repeats even times, a further even adjacency number is added to the degree of $v_{1,i+1}(p_i)$. Clearly after exhaustive adjacency count the degree of $v_{1,i+1}(p_i)$ is even. Similar reasoning holds for any subset hence, for any corresponding vertex in the graph $G_d(\iota(S_\phi(n)))$. Therefore the result.  
\end{proof}

\noi In view of the above results, we have some interesting corollaries as given below.

\begin{corollary}
An Euler Phi $\lcm$-divisor set-graph is an Eulerian graph.
\end{corollary}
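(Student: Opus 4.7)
The plan is to apply the classical Euler characterization: a graph is Eulerian iff it is connected and every vertex has even degree. Both ingredients are essentially already in hand from the two immediately preceding results, so the proof reduces to assembling them.

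First I would invoke Theorem \ref{Thm-3.7} to dispense with the parity hypothesis: every vertex of $G_d(\iota(S_\phi(n)))$ has even degree. This is the substantive half and the authors have just proved it, so no new work is needed there.

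Next I would establish connectedness. Lemma \ref{Lem-3.6} tells us that $\iota(v_{1,1})=1$ and the vertex $v_{1,1}(1)$ is adjacent to every other vertex of $G_d(\iota(S_\phi(n)))$, since $1$ divides every positive integer. Hence $v_{1,1}(1)$ is a universal vertex, which immediately forces the graph to be connected (any two vertices are joined by a path of length at most two through $v_{1,1}(1)$).

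Combining the two observations with Euler's theorem on Eulerian circuits then yields the claim. The only tiny point requiring care is the degenerate case $\phi(n)=1$ (i.e.\ $n=1,2$), where $S_\phi(n)=\{1\}$ and the set-graph is the single isolated vertex $v_{1,1}(1)$; this is trivially Eulerian, so the argument covers all $n\in\N$. I do not anticipate any real obstacle, since the heavy lifting has already been done in Theorem \ref{Thm-3.7}; this corollary is essentially a one-line consequence once Euler's criterion is invoked.
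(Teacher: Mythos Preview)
Your proposal is correct and follows essentially the same route as the paper: the authors also argue that $v_{1,1}(1)$ is adjacent to every other vertex (giving connectedness and non-emptiness) and then invoke Theorem~\ref{Thm-3.7} together with the classical Euler criterion. Your explicit treatment of the degenerate case $\phi(n)=1$ is a small extra that the paper leaves implicit.
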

\begin{proof}
Since the vertex $v_{1,1}(1)$ is adjacent to all vertices in an Euler Phi $\lcm$-divisor set-graph, the graph is connected and obviously, non-empty. Since it is well-known (see \cite{BM1}), that a graph is eulerian if and only if it is non-empty and connected and it has no vertices of odd degree, Theorem \ref{Thm-3.7} implies that an Euler Phi $\lcm$-divisor set-graph is Eulerian.
\end{proof}

\begin{corollary}
$\chi(G_{S_\phi(n)})\leq \chi(G_d(\iota(S_\phi(n))))$, where $\chi$ denotes the chromatic number of a graph.
\end{corollary}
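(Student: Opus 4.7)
The plan is to prove the inequality by exhibiting $G_{S_\phi(n)}$ as a spanning subgraph of $G_d(\iota(S_\phi(n)))$. Both graphs are defined on the same vertex set, namely the vertices $v_{s,i}$ indexing the non-empty subsets of $S_\phi(n)$, so it would suffice to verify the edge containment $E(G_{S_\phi(n)}) \subseteq E(G_d(\iota(S_\phi(n))))$. Once this containment is established, any proper $k$-colouring of the supergraph is automatically a proper $k$-colouring of the subgraph, and the chromatic inequality $\chi(G_{S_\phi(n)}) \leq \chi(G_d(\iota(S_\phi(n))))$ follows immediately.

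The main step is to take an arbitrary edge $v_{s,i}v_{t,j}$ of $G_{S_\phi(n)}$, set $X = A^{(\phi(n))}_{s,i}$ and $Y = A^{(\phi(n))}_{t,j}$ with $X \cap Y \neq \emptyset$, and argue that the $\iota$-weights $\lcm(X)$ and $\lcm(Y)$ are comparable under the divisibility order, which would place the edge in $E(G_d(\iota(S_\phi(n))))$. I would start from any $a \in X \cap Y$, observe that $a$ simultaneously divides $\lcm(X)$ and $\lcm(Y)$, and then try to leverage the fact that every element of $S_\phi(n)$ is coprime to $n$ in order to extend this common divisor into a full divisibility of lcms.

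The principal obstacle is that a shared divisor does not in general force two integers into a divisibility relation; for instance, $\lcm(\{2,3\}) = 6$ and $\lcm(\{2,5\}) = 10$ have $\gcd = 2$ yet neither divides the other. If the naive edge-inclusion attempt stalls on examples of this form, I would fall back to constructing a graph homomorphism $f \colon G_{S_\phi(n)} \to G_d(\iota(S_\phi(n)))$ that collapses intersecting pairs of subsets with incomparable lcms onto a common image. A natural candidate is to route vertices through the universal vertex $v_{1,1}(1)$ guaranteed by Lemma \ref{Lem-3.6}, for instance by mapping $v_{s,i}$ to the vertex associated with $\{1\} \cup A^{(\phi(n))}_{s,i}$ whenever incomparability threatens and then verifying that the composition remains edge-preserving. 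Pinning down such a map and checking that it is genuinely a homomorphism is the hard part and is where the real content of the proof resides.
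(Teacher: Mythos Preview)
Your primary plan---show that $G_{S_\phi(n)}$ is a spanning subgraph of $G_d(\iota(S_\phi(n)))$ and conclude the chromatic inequality---is precisely the paper's argument: the paper's entire proof is the single sentence ``since $G_d(\iota(S_\phi(n)))$ is a proper supergraph of the corresponding set-graph $G_{S_\phi(n)}$, the result follows immediately.''

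The trouble is that the counterexample you yourself raise is genuine, so this edge-containment claim is simply false. Take $n=7$, so that $S_\phi(7)=\{1,2,3,4,5,6\}$. The subsets $\{2,3\}$ and $\{2,5\}$ intersect, hence the corresponding vertices are adjacent in $G_{S_\phi(7)}$; but their $\iota$-weights are $\lcm(2,3)=6$ and $\lcm(2,5)=10$, neither of which divides the other, so they are \emph{not} adjacent in $G_d(\iota(S_\phi(7)))$. (Even $n=5$ already fails: $\{2,3\}$ and $\{2,4\}$ intersect, while their $\iota$-weights $6$ and $4$ are incomparable.) Thus $E(G_{S_\phi(n)})\not\subseteq E(G_d(\iota(S_\phi(n))))$ in general, and the paper's proof has the very gap you anticipated.

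Your fallback does not close the gap either. The map $X\mapsto \{1\}\cup X$ sends $\{2,3\}$ and $\{2,5\}$ to $\{1,2,3\}$ and $\{1,2,5\}$, whose $\iota$-weights are still $6$ and $10$; the images remain non-adjacent in $G_d$, so this is not a graph homomorphism. Routing ``through'' the universal vertex $v_{1,1}(1)$ cannot help, since a homomorphism must send an edge to an edge, not to a walk. In short, neither your proposal nor the paper's argument establishes the corollary; the subgraph assertion on which both rest is false, and no repair is supplied.
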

\begin{proof}
Since, $G_d(\iota(S_\phi(n)))$ is a proper super graph of the corresponding set-graph, $G_{S_\phi(n)}$ the result follows immediately.
\end{proof}

\begin{corollary}
$\chi(G_d(\iota(S_\phi(m)))) =\omega(G_d(\iota(S_\phi(m))))$ for $m\in \N$, where $\omega$ is the clique number of a graph.
\end{corollary}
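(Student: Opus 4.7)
The plan is to prove $\chi \le \omega$ (the reverse inequality is automatic) by constructing an explicit proper colouring that uses exactly $\omega$ colours, modelled on the classical poset-theoretic proof that comparability graphs are perfect.

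First I would partition the vertex set of $G_d(\iota(S_\phi(m)))$ by $\iota$-weight. Write $W=\{w_1,\ldots,w_r\}$ for the set of distinct $\iota$-weights realised in the graph and $m(w)$ for the multiplicity (the number of vertices whose $\iota$-value equals $w$). By the adjacency rule of an $\lcm$-divisor graph, two distinct vertices are adjacent iff their $\iota$-weights either agree or are comparable under divisibility; hence a maximum clique is obtained by selecting a chain $w_{i_1}\mid w_{i_2}\mid\cdots\mid w_{i_k}$ in the poset $(W,\mid)$ and taking \emph{all} $m(w_{i_j})$ vertices of each weight along it. This identifies $\omega$ as the ``weighted longest chain''
\[
\omega(G_d(\iota(S_\phi(m)))) \;=\; \max_{w\in W} h(w), \qquad h(w)\;=\;\max\Bigl\{\,m(w_{i_1})+\cdots+m(w_{i_k}) : w_{i_1}\mid\cdots\mid w_{i_k}=w\,\Bigr\}.
\]

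Second, I would use $h$ to colour: to each of the $m(w)$ vertices of weight $w$ assign a distinct colour from the block $\{h(w)-m(w)+1,\,\ldots,\,h(w)\}$. Properness of this colouring needs two checks. For two endpoints of equal $\iota$-weight the assigned colours differ by construction. For two endpoints with $\iota(u)\mid\iota(v)$ and $\iota(u)\ne\iota(v)$, the recursion $h(\iota(v))\ge h(\iota(u))+m(\iota(v))$ forces the colour blocks at $\iota(u)$ and $\iota(v)$ to be disjoint. Since all colours used lie in $\{1,\ldots,\max_{w}h(w)\}$, we obtain $\chi\le\omega$, completing the argument.

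The main obstacle is that $G_d(\iota(S_\phi(m)))$ is not literally the comparability graph of a poset: by Theorem \ref{Thm-3.4} almost every $\iota$-weight is attained by an even number $\ge 2$ of vertices, so the graph is really a clique blow-up of the divisibility poset on $W$. The weighted chain-length function $h$ is designed precisely to absorb these multiplicities, reducing matters to Mirsky's classical theorem that the chromatic number of a finite comparability graph equals the length of its longest chain.
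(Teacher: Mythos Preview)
Your argument is correct. The key observation --- that $G_d(\iota(S_\phi(m)))$ is the clique blow-up of the comparability graph of $(W,\mid)$, with multiplicities $m(w)$ --- is sound, and the weighted height function $h$ does yield a proper colouring with $\max_w h(w)=\omega$ colours. The properness check is airtight: for $\iota(u)\mid\iota(v)$ with $\iota(u)\neq\iota(v)$, any maximal weighted chain ending at $\iota(u)$ extends to one ending at $\iota(v)$, so the inequality $h(\iota(v))\ge h(\iota(u))+m(\iota(v))$ holds and the colour intervals are disjoint.

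The paper's own proof is quite different and considerably less self-contained. It first observes that for a prime $p$ one has $S_\phi(p)=\{1,2,\ldots,p-1\}$, then quotes an external reference (\cite{YT1}) for the fact that the corresponding $\lcm$-divisor set-graph is perfect, and finally notes that for any $m$ the set $S_\phi(m)$ sits inside some $S_\phi(p)$, so $G_d(\iota(S_\phi(m)))$ is an induced subgraph of a perfect graph and hence itself perfect. Your route bypasses the appeal to \cite{YT1} and \cite{SS1} entirely, giving an explicit optimal colouring; in effect you have reproved (the weighted form of) Mirsky's theorem tailored to this graph. The paper's approach buys brevity at the cost of relying on outside results; yours is longer but fully constructive and makes the structural reason for perfection transparent.
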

\begin{proof}
It is known that the relation between the clique number and the chromatic number of a graph is given by, $\omega(G)\leq \chi(G)$. It is also known that if $H\subseteq G$ then, $\chi(H)\leq \chi(G)$. Furthermore, if $p=n+1$ is a prime number then, $\phi((p)= n$ and $S_\phi(p) =\{1,2,3,\ldots,n\}$.  From Theorem 2 in \cite{SS1}, it follows that $\chi(G_d(\iota(S_\phi(p)))) = \lfloor log_2(2^n -1)\rfloor +1 =\omega(G_d(\iota(S_\phi(p))))$. In \cite{YT1}, it is shown that $G_d(\iota(S_\phi(p)))$ is a perfect graph. Therefore, for any $m$ for which $S_\phi(m)\subseteq S_\phi(p))$ the Euler Phi $\lcm$-divisor set-graph $G_d(\iota(S_\phi(m)))$ is also a perfect graph. Hence, the result follows.
\end{proof}

\begin{corollary}
The best upperbound for the chromatic and clique number is given by $\chi(G_d(\iota(S_\phi(m))))=\omega(G_d(\iota(S_\phi(m)))) \leq \lfloor log_2(2^n -1)\rfloor +1$. 
\end{corollary}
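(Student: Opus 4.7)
The plan is to quantify the preceding corollary by making a sharp choice of the prime $p$. Given $m$, I would pick $p$ to be the smallest prime with $p \geq 1 + \max S_\phi(m)$, and set $n = p-1 = \phi(p)$. With this choice, $S_\phi(p) = \{1, 2, \ldots, n\}$ contains $S_\phi(m)$, and taking $p$ minimal gives the sharpest upper bound attainable by this embedding strategy, which is why the result can be called a ``best'' upper bound in the current context.

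Next, I would verify that $G_d(\iota(S_\phi(m)))$ sits inside $G_d(\iota(S_\phi(p)))$ as an induced subgraph. Every non-empty $X \subseteq S_\phi(m)$ is also a non-empty subset of $S_\phi(p)$, and its $\iota$-weight $\lcm(X)$ is determined by $X$ alone and not by the ambient set in which the set-graph construction is performed. Since adjacency in these derivative set-graphs is dictated solely by divisibility between $\iota$-weights, the restriction of $G_d(\iota(S_\phi(p)))$ to this collection of vertices coincides with $G_d(\iota(S_\phi(m)))$. The usual monotonicity of clique and chromatic number under induced subgraphs then gives
\[
\omega(G_d(\iota(S_\phi(m)))) \leq \omega(G_d(\iota(S_\phi(p)))), \qquad \chi(G_d(\iota(S_\phi(m)))) \leq \chi(G_d(\iota(S_\phi(p)))).
\]

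Finally, I would invoke the previous corollary, which supplies the identity $\chi(G_d(\iota(S_\phi(p)))) = \omega(G_d(\iota(S_\phi(p)))) = \lfloor \log_2(2^{n}-1)\rfloor + 1$, together with the fact that $G_d(\iota(S_\phi(m)))$ is perfect as an induced subgraph of a perfect graph. Perfectness forces $\chi = \omega$ for the smaller graph as well, so the two monotonicity inequalities collapse into the single chain
\[
\chi(G_d(\iota(S_\phi(m)))) = \omega(G_d(\iota(S_\phi(m)))) \leq \lfloor \log_2(2^{n}-1)\rfloor + 1,
\]
which is the claimed bound.

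The main obstacle is the bookkeeping around the embedding step: one must confirm that the $\iota$-weight of a subset, and the adjacency rule that uses it, are intrinsic to the subset and do not depend on the ambient Euler Phi set in which the derivative set-graph is formed. Once this identification is justified, the remainder of the argument is a routine combination of subgraph monotonicity with the equality already established in the preceding corollary.
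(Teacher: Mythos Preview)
Your proposal is correct and follows essentially the same route as the paper: choose the smallest prime $p$ with $S_\phi(m)\subseteq S_\phi(p)=\{1,\ldots,p-1\}$, set $n=\phi(p)=p-1$, and then combine the induced-subgraph monotonicity with the equality $\chi=\omega=\lfloor\log_2(2^n-1)\rfloor+1$ already obtained for the prime case in the proof of the preceding corollary. The paper's argument is just the terse version of yours; your explicit verification that the $\iota$-weights and adjacency are intrinsic to the subset (so the embedding really is induced) is exactly the bookkeeping the paper leaves implicit.
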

\begin{proof}
Since for any $m\in \N$ there exists a smallest prime number $p$ such that $S_\phi(m)\subseteq S_\phi(p)$, $\phi((p)= n$. Therefore the result.
\end{proof}

In \cite{VG1}, we can find an easy algorithm to find the prime factors of any positive non-prime integer $n\geq 2$, which is as explained below: 

\textbf{Vishwas Garg Algorithm} \cite{VG1}. In adapted form it is informally described as follows:

\textit{Step 1:} If $n\geq 2$ is non-prime and odd let $o=n$, $\ell=0$ and let the set, $\mathcal P =\emptyset$ and go to Step 2. Else, divide by 2 iteratively, say $\ell$ iterations until an odd number say, $o$ is obtain. Let $\mathcal P \leftarrow \mathcal P \cup \{2\}$. If $o=1$,  go to Step 3 else, go to Step 2.

\textit{Step 2:} For $o$ and $i\in \{3,5,7,\ldots,max\{p: p\leq \sqrt o, p~an~odd~number\}\}$ and similarly as for 2 in Step 1, sequentially, divide iteratively by $i$ until 1 is obtained. Add (or $\cup$) all values of $i$ for which were divisors to the set $\mathcal P$. Go to Step 3.

\textit{Step 3:} Let $\mathcal P' = \mathcal P \cup \{t\cdot i: t\in \N~and~ t\cdot i\leq n\}$. Go to Step 4.

\textit{Step 4:} Let $S_\phi(n) = \{i:1\leq i\leq n-1\} -\mathcal P'$ and $\phi(n)=|S_\phi(n)|$.

Besides efficiency considerations, the above adapted algorithm provides the claimed results. Thus, the prime factors of $n$ are the elements of the set $\mathcal P$. This follows from the fact that the prime factors of an integer are unique. The division operator applied to a finite integer does exhaust in finite iterations. Hence, convergence of the algorithm follows. Also, since for any two consecutive prime numbers $p_1,p_2$ there exists a third prime number $p_3$ such that $2p_2<p_3<p_1p_2$, the iterative upper limit given by $\max\{p: p\leq \sqrt o, \text{$p$ an odd number}\}$, suffices. Steps 3 and 4 are well-defined, exhaustive and presents a unique result. The applicability of the adapted Vishwas Garg Algorithm is self evident in that following Step 4 it is easy to apply Definition \ref{Defn-3.1} and to obtain $\iota$-weights and to then construct a divisor graph. Therefore, if $\mathcal{P} = P_s(k)$ for some $k\in \N$, the result of Theorem \ref{2.3} applies to the corresponding divisor graph. In that sense the adapted Vishwas Garg Algorithm serves as methodology to test for $n\in \N$ whether or not $G_p(S_\phi(n))$ is acyclic without constructing the graph.

We denote the Euler Phi $\lcm$-relatively prime set-graph by, $G_p(\iota(S_\phi(n)))$. Then, 

\begin{lemma}
An Euler Phi $\lcm$-relatively prime set-graph $G_p(\iota(S_\phi(n)))$ has the maximum degree $\Delta (G_p(\iota(S_\phi(n))))= 2^{\phi(n))}-2$.
\end{lemma}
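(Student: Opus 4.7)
The plan is to mirror the structure of Lemma \ref{Lem-3.6}, but even more simply, by isolating the one vertex whose $\iota$-weight forces relative primality with every other $\iota$-weight. First I would observe that the underlying set-graph construction of Definition \ref{Defn-3.1}, applied to $S_\phi(n)$, produces one vertex for each non-empty subset of $S_\phi(n)$, giving a total of $2^{\phi(n)}-1$ vertices. Consequently, any vertex in $G_p(\iota(S_\phi(n)))$ has degree at most $2^{\phi(n)}-2$; this will serve as the trivial upper bound.

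Next I would exhibit a vertex that attains this bound. Since $\gcd(1,n)=1$ for every $n\in\N$, the integer $1$ always belongs to $S_\phi(n)$, so the singleton $\{1\}$ is a vertex of the set-graph, namely $v_{1,1}$, with $\iota$-weight $\iota(v_{1,1})=1$. Because $\gcd(1,\ell)=1$ for every positive integer $\ell$, the $\iota$-weight of $v_{1,1}$ is relatively prime to the $\iota$-weight of every other vertex of $G_p(\iota(S_\phi(n)))$. By the adjacency rule for the second derivative set-graph, $v_{1,1}(1)$ is therefore adjacent to each of the remaining $2^{\phi(n)}-2$ vertices, so $d_{G_p(\iota(S_\phi(n)))}(v_{1,1})=2^{\phi(n)}-2$.

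Combining the two bounds gives $\Delta(G_p(\iota(S_\phi(n))))=2^{\phi(n)}-2$, as claimed. There is no genuine obstacle here: the argument is a direct analogue of the first half of Lemma \ref{Lem-3.6}, where the role of the universal divisor $1$ is replayed, only now under the relative-primality adjacency. Unlike the divisor case, I would not expect the top $\iota$-weight vertices $v_{\phi(n)-1,\phi(n)}$ and $v_{\phi(n),1}$ to realise the maximum degree as well, since their $\iota$-weight is $\lcm(S_\phi(n))$, which shares prime factors with many smaller $\iota$-weights; so the lemma only asserts the value of $\Delta$ and not the number of vertices attaining it, which keeps the proof clean.
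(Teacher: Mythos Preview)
Your proposal is correct and follows essentially the same approach as the paper: both arguments hinge on the observation that $\iota(v_{1,1})=1$ is relatively prime to every integer, so $v_{1,1}(1)$ is adjacent to all other $2^{\phi(n)}-2$ vertices. Your write-up is somewhat more explicit about the upper bound and adds some commentary on why the top-$\iota$ vertices do not attain $\Delta$, but the core idea is identical to the paper's one-line proof.
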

\begin{proof}
It follows from the fact that $\iota(v_{1,1}) = 1$ and $1$ is relatively prime to all integers hence, vertex $v_{1,1}(1)$ is adjacent to all vertices.
\end{proof}

We now present a relation between an Euler Phi $\lcm$-relatively prime set-graph and the corresponding Euler Phi $\lcm$-divisor set-graph.

\begin{theorem}
Let $\iota(S'_\phi(n))= \iota(S_\phi(n))-\{v_{1,1}(1)\}$. If for all pairs of distinct vertices $v_{s,i}(\iota),v_{t,j}(\iota) \in \iota(S'(\phi))$ the respective $\iota$-weights are either divisible or relatively prime, then $G_p(\iota(S_\phi(n)))= \overline{G}_d(\iota(S_\phi(n)-\{v_{1,1}(1)\})) + v_{1,1}(1)$. In other words, $G_d(\iota(S_\phi(n)))= \overline{G}_p(\iota(S_\phi(n)-\{v_{1,1}(1)\})) + v_{1,1}(1)$.
\end{theorem}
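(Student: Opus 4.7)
The plan is to exploit the fact that the vertex $v_{1,1}(1)$ is a universal (apex) vertex in both $G_d(\iota(S_\phi(n)))$ and $G_p(\iota(S_\phi(n)))$, which lets us reduce the identity to a complementation statement on the vertex set with $v_{1,1}(1)$ removed. Since $\iota(v_{1,1})=1$ and $1$ both divides every positive integer and is coprime to every positive integer, the vertex $v_{1,1}(1)$ is adjacent to every other vertex in each of the two set-graphs. Consequently we may write
\[
G_d(\iota(S_\phi(n))) \;=\; G_d(\iota(S_\phi(n)-\{v_{1,1}(1)\})) + v_{1,1}(1),
\]
and similarly for $G_p$. So it suffices to show that, under the hypothesis, the graphs $G_d(\iota(S_\phi(n)-\{v_{1,1}(1)\}))$ and $G_p(\iota(S_\phi(n)-\{v_{1,1}(1)\}))$ are complements of each other on the common vertex set.

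Next I would observe that, by Proposition \ref{Prop-3.3}(ii), the vertex $v_{1,1}$ is the \emph{only} vertex with $\iota$-weight equal to $1$; every other vertex carries an $\iota$-weight which is at least $2$. I would then prove the following key mutual-exclusivity claim: if $j,\ell$ are two positive integers with $j,\ell \geq 2$, then it is impossible to have both (a) $j\mid \ell$ or $\ell\mid j$, and (b) $\gcd(j,\ell)=1$. Indeed, if $j\mid \ell$ then $\gcd(j,\ell)=j\geq 2$, and symmetrically, so (a) and (b) cannot hold simultaneously. The case $j=\ell\geq 2$ also falls on the divisibility side and fails coprimality.

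Combining this with the hypothesis that \emph{every} pair of distinct vertices in $\iota(S'_\phi(n))$ satisfies one of (a) or (b), we conclude that on the vertex set $V':=V(G)-\{v_{1,1}(1)\}$, an edge of $G_d$ (corresponding to case (a)) is precisely a non-edge of $G_p$ (since case (b) fails) and vice versa. Hence
\[
E\!\left(G_p(\iota(S_\phi(n)-\{v_{1,1}(1)\}))\right)
\;=\; E\!\left(\overline{G}_d(\iota(S_\phi(n)-\{v_{1,1}(1)\}))\right),
\]
and re-attaching $v_{1,1}(1)$ as a universal vertex gives the first claimed identity. The second identity follows by the symmetric argument (or by taking the complement of the first, on $V'$, and again joining $v_{1,1}(1)$).

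The only real obstacle is the bookkeeping around the vertex $v_{1,1}(1)$: one must be sure that (i) it is the unique vertex of weight $1$ (so that mutual exclusivity genuinely applies to every \emph{other} pair), and (ii) that it is indeed universal in both graphs (so that the join decomposition is clean). The first point is handled by Proposition \ref{Prop-3.3}(ii), and the second is immediate from elementary divisibility. Everything else is routine edge-by-edge comparison under the stated hypothesis.
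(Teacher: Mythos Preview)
Your proof is correct and follows essentially the same route as the paper's: mutual exclusivity of divisibility and coprimality for $\iota$-weights at least $2$, together with the observation that $v_{1,1}(1)$ is universal in both graphs. One minor point: your appeal to Proposition~\ref{Prop-3.3}(ii) is slightly off, since that proposition assumes the elements of the underlying set are pairwise relatively prime, which $S_\phi(n)$ need not satisfy; however the uniqueness of weight $1$ at $v_{1,1}$ is elementary anyway (every other subset contains an element $\geq 2$, hence has $\lcm\geq 2$), so this does not affect the argument.
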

\begin{proof} The result follows from the fact that if for $v_{s,i}(\iota),v_{t,j}(\iota) \in \iota(S'(\phi))$ the respective $\iota$-weights are divisible, then $v_{s,i}(\iota),v_{t,j}(\iota)$ are not relatively prime and vice versa. Also $\iota(v_{1,1}(1))$ both divides and is relatively prime to all $\iota(v_{s,i}(\iota))$.  
\end{proof}

\begin{theorem}
For $n\in \N$ and $\phi(n)\geq 4$, both $G_p(\iota(S_\phi(n)))$ and $G_d(\iota(S_\phi(n)))$ contains at least one triangle.
\end{theorem}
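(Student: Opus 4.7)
The plan is to exhibit an explicit triangle in each of the two graphs, using the vertex $v_{1,1}(1)$ corresponding to the singleton $\{1\}$ as a common vertex. Since the integer $1$ both divides every positive integer and is coprime to every positive integer, $v_{1,1}(1)$ is adjacent to every other vertex of $G_d(\iota(S_\phi(n)))$ and of $G_p(\iota(S_\phi(n)))$. The task therefore reduces, in each case, to producing a single edge among the remaining vertices.

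For the divisor set-graph this is immediate. Since $\phi(n)\geq 4$, the set $S_\phi(n)\setminus\{1\}$ is non-empty; fix any $a$ in it. The singleton vertex carrying $\{a\}$ has $\iota$-weight $a$, while the pair vertex carrying $\{1,a\}$ has $\iota$-weight $\lcm(1,a)=a$. Together with $v_{1,1}(1)$ the three $\iota$-weights are $1,a,a$, which pairwise satisfy the divisibility condition since $1\mid a$ and $a\mid a$, so a triangle is produced in $G_d(\iota(S_\phi(n)))$.

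For the relatively prime set-graph the plan is to locate two elements $a,b\in S_\phi(n)\setminus\{1\}$ with $\gcd(a,b)=1$; given such a pair, the three singleton vertices with $\iota$-weights $1,a,b$ are pairwise coprime and form the desired triangle. I would establish existence of the pair by a parity split. If $n$ is odd then $2\in S_\phi(n)$, and Bertrand's postulate supplies a prime $p\in(n/2,n)$; any prime divisor of $n$ lying in $(n/2,n)$ would have to equal $n$ itself, so $p\nmid n$ and hence $p\in S_\phi(n)\setminus\{1\}$, giving the coprime pair $\{2,p\}$. If $n$ is even then $\phi(n)\geq 4$ forces $n\geq 8$, and applying Bertrand's postulate twice, to $(n/2,n)$ and to $(n/4,n/2)$, produces two distinct odd primes $p_1,p_2$ with $p_1,p_2<n$; each is coprime to $n$ since a prime divisor of $n$ in $(n/2,n)$ must equal $n$ and a prime divisor of $n$ in $(n/4,n/2)$ must equal $n/3$, both of which are ruled out by $n$ being even in the relevant range, so $\{p_1,p_2\}$ works.

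The principal obstacle is the existence of a coprime pair of non-unit elements in $S_\phi(n)$. The parity case-split combined with Bertrand's postulate uniformly covers every sufficiently large $n$, while the four smallest admissible values $n\in\{5,8,10,12\}$ (at which $\phi(n)=4$) can be dispatched by direct inspection: the sets $S_\phi(5)=\{1,2,3,4\}$, $S_\phi(8)=\{1,3,5,7\}$, $S_\phi(10)=\{1,3,7,9\}$ and $S_\phi(12)=\{1,5,7,11\}$ each contain an obvious coprime pair, for instance $\{2,3\}$, $\{3,5\}$, $\{3,7\}$ and $\{5,7\}$ respectively.
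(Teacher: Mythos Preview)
Your proof is correct and follows essentially the same strategy as the paper: use $v_{1,1}(1)$ as a universal vertex, exhibit the triangle on $\iota$-weights $1,a,a$ for $G_d$, and for $G_p$ split into $n$ odd versus $n$ even and use singleton vertices indexed by primes in $S_\phi(n)$. The only difference is that the paper simply asserts (without argument) that the needed primes lie in $S_\phi(n)$, whereas you supply a justification via Bertrand's postulate; your version is therefore the more rigorous of the two.
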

\begin{proof}
For $n=1,2$ both $\phi(n)=1$ hence, $G_p(\iota(S_\phi(n)))$ and $G_d(\iota(S_\phi(n)))$ are an isolated vertex so are excluded. For $n=3$, $S_\phi(3)=\{1,2\}$ and hence, $G_d(\iota(S_\phi(3)))$ is a triangle and $G_p(\iota(S_\phi(3)))$ is path $P_3$ so are excluded. Similarly,  $G_d(\iota(S_\phi(4)))$, $G_p(\iota(S_\phi(4)))$, $G_d(\iota(S_\phi(6)))$ and $G_p(\iota(S_\phi(6)))$ are excluded.

For $n\in \N$ and $\phi(n)\geq 4$, let $S_\phi(n)= \{1, a_1,a_2,a_3,\ldots,a_\ell\}$. The vertices $v_{1,1}(1)$, $v_{1,a_i}(a_i)$ and $v_{2,(a_i-1)}(a_1)$ induce a triangle in $G_d(\iota(S_\phi(n)))$. Hence, $G_d(\iota(S_\phi(n)))$ contains at least one triangle.

If $n\geq 5$ is odd then, $2\in S_\phi(n)$. Furthermore, for such $n$ at least one prime number $2<p_1\leq n$, $p_1\in S_\phi(n)$, exists. Hence, vertices $v_{1,1}(1)$, $v_{1,2}(2)$ and $v_{1,p_1}(p_1)$ induce a triangle in $G_p(\iota(S_\phi(n)))$.

If $n\geq 8$ is even and since $\phi(n)\geq 4$ at least two prime numbers, $p_1,p_2 \in S_\phi(n)$, exist. Hence, vertices $v_{1,1}(1)$, $v_{1,p_1}(p_1)$ and $v_{1,p_2}(p_2)$ induce a triangle.

Therefore, for $n\geq 5~and~\neq 6$, $G_p(\iota(S_\phi(n)))$ contains at least one triangle.
\end{proof}


\section{Conclusion}

It was observed that the divisor graph, $G_d(S_\phi(n))= \overline{G}_p(S_\phi(n)-\{v_1\}) + v_1$ and the relatively prime graph, $G_p(S_\phi(n))= \overline{G}_d(S_\phi(n)-\{v_1\}) + v_1$. These observations permit the notion of a quasi-complement of any graph $G$. Let $G' =\langle V(G)-\{u\}_{u\in V(G)}\rangle$. A quasi-complement of $G$ is defined to be, $\overline{G}_u = \overline{G'}+u$. Hence, a finite family of quasi-complement exist. To be exact, $\nu(G)$ such quasi-complements. It easily follows that if $G$ is not connected and has an isolated vertex $u$ then, $\overline{G}=\overline{G}_u$. The notion of quasi-complements offers a new direction of research.

The study of the number of edges as well as the chromatic number of the derivative Euler Phi set-graphs ($\lcm$-divisor and $\lcm$-relatively prime) remains open.

It is known that integers $a,b$ can be: (i) not divisors and not relatively prime or, (ii) not divisors and relatively prime or, (iii) divisor and not relatively prime and finally, (iv) for say $a= 1$ which is both divisor and relatively prime to all positive integers. Therefore, a study of the relative divisor graph that is, vertex $v_a,v_b$ are adjacent if and only if $\gcd(a,b)\neq 1$ remains open. The authors are not aware of any studies related to this derivative graph. It will be advisable to exclude the element $1$, from the set under consideration. If this new graph is denoted by, $G_{rd}(\mathcal{S})$, $1\notin \mathcal{S}$, then clearly $G_d(\mathcal{S})\subseteq G_{rd}(\mathcal{S})$.





\begin{thebibliography}{99}

\bibitem {BM1} J.A. Bondy and U. S.R. Murty, (2008). {\em Graph theory}, Springer.

\bibitem{TMA} T.M. Apostol, (1998). \textit{Introduction to analytic number theory}, Narosa Publ. House, New Delhi.

\bibitem {BLS} A. Brandst\"{a}dt, V.B. Le and J.P. Spinrad, (1999). {\em Graph classes: A survey}, SIAM, Philadelphia.

\bibitem{CMSZ} G. Chartrand, R. Muntean, V. Saenpholphat and P. Zhang, (2001). Which graphs are divisor graphs?, {\em Congr. Numer.}, {\bf 151}: 189--200.

\bibitem{ND} N. Deo, (1974). {\em Graph theory with application to engineering and computer science}, Prentice Hall of India, New Delhi.

\bibitem{EFH} P. Erd\"{o}s, R. Freud and N. Hegyv\'{a}ri, (1983). Arithmetical properties of permutations of integers. {\em Acta Math. Hungar.}, {\bf 41}(1-2): 169--176.

\bibitem{ES1} P. Erd\"{o}s and J. Sur\'{a}nyi, (2003). Properties of prime numbers. In: \textit{Topics in the Theory of Numbers}, pp. 157--180, Springer, New York.

\bibitem{CF1} C. Frayer, (2003). Properties of divisor graphs. {\em Rose-Hulman Undergraduate Math. J.}, {\bf 4}(2)-Article 4: 1--10.

\bibitem{FH1} H. Fu and K. Huang, (1994). On prime labelings, {\em Discrete. Math.}, {\bf 127}(1-3): 181--186.

\bibitem{VG1} V. Garg, Efficient program to print all prime factors of a given number, \url{www.geeksforgeeks.org/print-all-prime-factors-of-a-given-number}.

\bibitem{FH} F. Harary, (2001). {\em Graph theory}, Narosa Publications, New Delhi.

\bibitem{KCSS} J. Kok, K.P. Chithra, N.K. Sudev and C. Susanth (2015). A study on set-graphs. \textit{Int. J. of Comp. App.}, \textbf{118}(7): 1--5, DOI: 10.5120/20754-3173.

\bibitem{SS1} S. Shanmugavelan, (2017). The Euler function graph $G(\phi(n))$. \textit{Int, J. of Pure and Appl. Math.}, \textbf{116}(1): 45--48, DOI: 10.12732/ijpam.v116i1.4. 

\bibitem{YT1} Y. Tsao, (2012). A simple research of divisor graphs. {\em Proceedings of the $29$-th Workshop on Combinatorial Mathematics and Computation Theory}, pp. 186--190,  Institute of Information and Decision Sciences, Taipei, Taiwan.

\bibitem{EWW} E.W. Weisstein, (2011). {\em CRC concise encyclopedia of mathematics}, CRC press, Boca Raton.

\bibitem {DBW} D.B. West, (2001). {\em Introduction to graph theory}, Pearson Education, Delhi.

\end{thebibliography}
\end{document}